\theoremstyle{plain}
\newtheorem{thm}{Theorem}[section]
\newtheorem{prop}[thm]{Proposition}
\newtheorem{lemma}[thm]{Lemma}
\theoremstyle{remark}
\newtheorem{rem}[thm]{Remark}
\theoremstyle{definition}
\newtheorem{defi}[thm]{Definition}
\def\today{{\number\day\space
 \ifcase\month\or
  January\or February\or March\or April\or May\or June\or
  July\or August\or September\or October\or November\or December\fi
 \space\number\year}}
\newcommand\smd[2]{\underset{#2}{#1}}
\newcommand\smdp[3]{\overset{#3}{\smd{#1}{#2}}}
\newcommand\Cpx{{\mathbb{C}}}
\newcommand\Nats{{\mathbb{N}}}
\newcommand\Fc{{\mathcal{F}}}
\newcommand\Mcal{{\mathcal{M}}}
\newcommand\Nc{{\mathcal{N}}}
\newcommand\Nct{{\widetilde\Nc}}
\newcommand\Pc{{\mathcal{P}}}
\newcommand\Qc{{\mathcal{Q}}}
\newcommand\Bt{{\widetilde B}}
\newcommand\gammat{{\tilde\gamma}}
\newcommand\id{{\operatorname{id}}}
\newcommand\restrict{{\upharpoonright}}
\newcommand\lspan{\mathrm{span}\,}
\newcommand\bigfreeprod{\mathlarger{\mathlarger{\mathlarger{*}}}}
\newcommand\rankns{\text{r}_{\text{ns}}}
\begin{document}

\title[Free products]{Free products and rescalings involving non-separable abelian von Neumann algebras}

\author[Dykema]{Ken Dykema}
 \address{Ken Dykema, Department of Mathematics, Texas A\&M University, College Station, TX 77843-3368, USA.}
 \email{kdykema@math.tamu.edu}
 
 \author[Zhao]{Junchen Zhao}
 \address{Junchen Zhao, Department of Mathematics, Texas A\&M University, College Station, TX 77843-3368, USA.}
 \email{junchenzhao@tamu.edu}

\begin{abstract} 
For a self-symmetric tracial von Neumann algebra $A$, we study rescalings of $A^{*n} * L\mathbb{F}_r$ for $n \in \mathbb{N}$ and $r \in (1, \infty]$ and use them to obtain an interpolation $\mathcal{F}_{s,r}(A)$ for all real numbers $s>0$ and $1-s < r \leq \infty$. We get formulas for their free products, and free products with finite-dimensional or hyperfinite von Neumann algebras. In particular, for any such $A$, we can compute compressions $(A^{*n})^t$ for $0<t<1$, and the Murray-von Neumann fundamental group of $A^{*\infty}$. When $A$ is also non-separable and abelian, this answers two questions in Section 4.3 of \cite{NON_ISOM}.

\end{abstract}

\date{\today}

\maketitle

\section{Introduction}

The free group factors $L\mathbb{F}_n$ for natural numbers $n \geq 2$ are among the first examples of II$_1$ factors considered by Murray and von Neumann in the 1940s in \cite{MvN43}. They showed that $L\mathbb{F}_2 \not \cong \mathcal{R}$, where $\mathcal{R}$ is the hyperfinite II$_1$ factor. However, the free group factor isomorphism problem, first asked in 1967 by Kadison, namely,
 whether $L\mathbb{F}_n \cong L\mathbb{F}_m$ or not for $n \neq m$, remains open until this day. In the 1990s, using Voiculescu's theory of free probability, the first author in \cite{IFGF} and R\u adulescu in \cite{Rad94} independently found a continuous family of II$_1$ factors $L\mathbb{F}_r$ for $r \in (1, \infty]$ as an interpolation of the free group factors. The family $L\mathbb{F}_r$ satisfies the following rescaling and free product addition formulas:
\[
	(L\mathbb{F}_r)^t = L\mathbb{F}_{1 + \frac{r-1}{t^2}}
\]
and
\[
	L\mathbb{F}_r * L\mathbb{F}_{r'} = L\mathbb{F}_{r + r'}
\]
for all $1 < r, r' \leq \infty$ and $0 < t < \infty$. These formulas led to the proof of the dichotomy that the interpolated free group factors, and thus the free group factors, are either all non-isomorphic or all isomorphic to each other.

 Recently in Theorem 1.1 of \cite{NON_ISOM}, Boutonnet, Drimbe, Ioana, and Popa settled the non-separable version of the free group factor isomorphism problem: for any non-separable diffuse abelian tracial von Neumann algebra $A$, the II$_1$ factors $A^{*n}$ are all non-isomorphic and have trivial fundamental group, for all natural numbers $n \geq 2$. Examples of such abelian algebras $A$ include $L\mathbb{R}$, namely, the group von Neumann algebra of $\mathbb{R}$ taken with discrete topology,
and the ultrapower $(L\mathbb{Z})^\omega$, where $\omega$ is a free ultrafilter on $\mathbb{N}$. In Proposition 4.1 and Corollary 4.4 in \cite{NON_ISOM}, using tools in free probability, compressions $(A^{*n})^{t}$ are computed to be
\begin{equation} \label{eq:A_*n^t}
	(A^{*n})^t = A^{*(n/t)} * L\mathbb{F}_{\frac{n-1}{t^2} - \frac{n}{t} + 1}
\end{equation}
 for any $n = 2,3,\ldots, \infty$, $t = 1/m$ for some $m \in \mathbb{N}$, and any \textit{homogeneous} abelian tracial von Neumann algebra $A$, namely, such that for every $k \in \mathbb{N}$, there exists a partition of unity into $k$ projections $p_1, \ldots, p_k \in A$ such that for every $1 \leq j \leq k$ we have that $\tau(p_j) = 1/k$ and $(Ap_j, k \tau |_{Ap_j}) \cong (A, \tau)$ (see Section 4.3 of \cite{NON_ISOM}). Our goal is to study rescalings of $(A^{*n})^t$ for more general $t \in (0, \infty)$.

In order to avoid tracking different pieces of $A$ in the rescalings of the free product, we impose a condition that roughly says $A$ can be continuously decomposed into pieces that are all the same as $A$ itself in a trace-scaling way:

\begin{defi} \label{def:self_symmetric}
	A tracial von Neumann algebra $(A, \tau)$ is called \textit{self-symmetric} if, for all $0 < t < 1$, there exists a central projection $p \in A$ with $\tau(p) = t$ and tracial isomorphisms $(Ap, t^{-1} \tau |_{Ap}) \cong (A, \tau) \cong (A(1-p), (1-t)^{-1} \tau |_{A(1-p)})$. 
\end{defi}

\begin{rem}
	Note here we are not requiring $A$ to be abelian itself. Rather, we just require $A$ to have central projections that do the job. Although most, if not all, literature related to our results in this paper concerns the cases where $A$ is abelian, we decide to use this definition that witnesses the interpolation of free product phenomenon for a more general class of tracial von Neumann algebras. Note that direct sums of self-symmetric tracial von Neumann algebras are still self-symmetric. Abelian examples include $L\mathbb{Z}$, $(L\mathbb{Z})^\omega$, and $(L\mathbb{Z})^\omega \oplus (L\mathbb{Z})$. Non-abelian examples include any self-symmetric abelian tracial von Neumann algebra tensor product with any non-abelian tracial von Neumann algebra. 
\end{rem}

\begin{rem}
	Note that for an abelian tracial von Neumann algebra $(A, \tau)$, it follows trivially from definitions that being self-symmetric implies being homogeneous, and being homogeneous implies being diffuse. We actually show in Theorem \ref{thm:abvNa} that any diffuse abelian tracial von Neumann algebra is self-symmetric, regardless of separability.  
\end{rem}
 
For a self-symmetric tracial von Neumann algebra $A$, $s \in (0, \infty)$, and $r \in (1-s, \infty]$, we define II$_1$-factors $\mathcal{F}_{s,r}$ (Definition \ref{def:IFGF}) so that $\mathcal{F}_{s,r} $ is isomorphic to $ A^{*s} * L\mathbb{F}_r$ when $s \in \mathbb{N}$ and $r \in (1, \infty]$. We prove the rescaling formula
 \begin{equation}\label{eq:F_rescale}
 	(\mathcal{F}_{s,r})^t \cong \mathcal{F}_{\frac{s}{t},\frac{s+r-1}{t^2} - \frac{s}{t} + 1}  
 \end{equation}
 for all $s \in (0, \infty)$, $r \in (1-s, \infty]$, $t \in (0, \infty)$. We also prove the addition formula for free products
 \begin{equation}\label{eq:F_addition}
 	\mathcal{F}_{s, r} * \mathcal{F}_{s', r'} \cong \mathcal{F}_{s + s', r + r'}
 \end{equation}
 for all $s, s' \in (0, \infty)$, $r \in (1-s, \infty]$, $r' \in (1-s', \infty]$. 

 We comment that, by results in \cite{NON_ISOM}, if $A$ is also non-separable and abelian, then $\mathcal F_{s,r} \not \cong \mathcal F_{s',r'} $ whenever $s \neq s'$. We will also develop theorems about countably infinite free products of members of the family $\mathcal F_{s,r}$. Combining with our formulas (\ref{eq:F_rescale}) and (\ref{eq:F_addition}), those allow us to show the fundamental group of $A^{*\infty}$ is $\mathbb R_+^*$, for all self-symmetric tracial von Neumann algebras $A$. 
 
\section{Background and Notation}

\subsection{Tracial von Neumann algebras and their isomorphisms}

A tracial von Neumann algebra is a pair $(M,\tau)$ of a von Neumann algebra $M$ endowed with a normal, faithful, tracial state $\tau$. By an isomorphism between tracial von Neumann algebras $(M,\tau_M)$ and $(N, \tau_N)$, we mean there exists a trace-preserving $*$-isomorphism, namely, $\phi : M \rightarrow N$ such that $\tau_N \circ \phi = \tau_M$ on $M$. The isomorphisms throughout this paper will all be tracial. We, however, simply write as $M \cong N$ when there is no ambiguity about the traces involved. 

\subsection{Rescaling of a II\texorpdfstring{$_1$}{1} factor}


For a II$_1$ factor $M$ and $t \in (0, \infty)$, we denote the rescaling of $M$ by $t$ as $M^t$. These were first considered by Murray and von Neumann in \cite{MvN43} and they used them to define the fundamental group of a II$_1$ factor.

\subsection{Tracial ultrapowers and non-separability} See Section 2.2 of \cite{ultrapower} for an excellent exposition of ultrafilters and ultrapowers. We observe the following corollary to the proofs of Lemma 2.2 and 2.3(1) in \cite{ultrapower}: for any free ultrafilter $\omega$ on $\mathbb N$, $(L\mathbb Z)^\omega$ is not only non-separable but also \textit{purely non-separable}, which in the abelian case means having no separable direct summand.

\subsection{Free dimension}  Free dimension, first defined in~ \cite{FDIM}, is a tool for computing various free products involving finite-dimensional or hyperfinite von Neumann algebras, interpolated free group factors, and direct sums of those.  See~\cite{FDIM} for details.

\subsection{Weighted direct sum} For a finite or countably infinite index set $I$, and tracial von Neumann algebras $\{(A_i, \tau_i)\}_{i \in I}$, we denote as 
\[
	A = \bigoplus_{i \in I} \overset{p_i}{\underset{t_i}{A_i}}
\]
the von Neumann algebra direct sum of $\{A_i\}_{i \in I}$ equipped with the trace $\tau((a_i)_{i \in I}) = \sum_{i \in I} t_i\tau_i(a_i)$, where $0 < t_i < 1$ for all $i \in I$, $\sum_{i \in I} t_i = 1$, and $p_i$ is the projection in $A$ that is the identity in $A_i$ and zero elsewhere. 

Here we prove a lemma on weighted direct sums of self-symmetric tracial von Neumann algebras that will be of use later.

\begin{lemma} \label{lem:direct_sum_self_symm}
	Let $I$ be a finite or countably infinite index set, and $\{t_i\}_{i \in I}$ be numbers in $(0, 1)$ such that $\sum_{i \in I} t_i = 1$. Let $(A,\tau)$ be a self-symmetric tracial von Neumann algebra. Then 
	\[
		A \cong \bigoplus_{i \in I} \underset{t_i}{A_i}
	\]
	where each $A_i$ is a copy of $A$. 
\end{lemma}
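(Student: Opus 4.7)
The plan is to reduce to the defining property of self-symmetry by induction in the finite case and by iteration in the countable case.

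First I would handle the case $|I| = 2$, which is immediate from Definition~\ref{def:self_symmetric}: given weights $t_1, t_2 \in (0,1)$ with $t_1 + t_2 = 1$, choose the central projection $p$ with $\tau(p) = t_1$ guaranteed by self-symmetry; then $(Ap, t_1^{-1}\tau|_{Ap}) \cong (A,\tau)$ and $(A(1-p),t_2^{-1}\tau|_{A(1-p)}) \cong (A,\tau)$, so writing $A_1 = Ap$ and $A_2 = A(1-p)$ gives the desired weighted direct sum decomposition.

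For finite $I = \{1,\dots,n\}$ I would induct on $n$. Given weights $t_1,\dots,t_n$ summing to $1$, apply the two-summand case with $t = t_1$ to get $A \cong A_1 \oplus_{t_1,\,1-t_1} A'$, where $(A',(1-t_1)^{-1}\tau|_{A'}) \cong (A,\tau)$. Since the weights $t_2/(1-t_1),\dots,t_n/(1-t_1)$ lie in $(0,1)$ and sum to $1$, the inductive hypothesis applied to $A'$ (with its rescaled trace) decomposes it into $n-1$ copies of $A$ with those weights. Multiplying the weights back through by $1-t_1$ recovers $A \cong \bigoplus_{i=1}^n \underset{t_i}{A_i}$.

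For countably infinite $I$, I would iteratively extract one summand at a time. Inductively construct mutually orthogonal central projections $p_1,p_2,\dots$ in $A$ such that $\tau(p_i) = t_i$ and, writing $q_n = 1 - p_1 - \cdots - p_n$, the algebra $(Aq_n, \tau(q_n)^{-1}\tau|_{Aq_n})$ is isomorphic to $(A,\tau)$. At stage $n+1$, the relative weight $t_{n+1}/\tau(q_n) = t_{n+1}/(1 - t_1 - \cdots - t_n)$ lies in $(0,1)$ because the partial sums of the $t_i$ are strictly less than $1$, so self-symmetry applied inside $Aq_n$ produces a central projection of the required trace. Because $\tau$ is normal and faithful, $\sum_i \tau(p_i) = \sum_i t_i = 1$ forces $\bigvee_i p_i = 1$, hence $A \cong \bigoplus_{i\in I} Ap_i$ as weighted direct sums with each summand a copy of $A$.

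The only place requiring care is the countable step: one must verify both that the process never halts (the relative weights stay in $(0,1)$) and that the resulting projections sum to $1$ rather than leaving a residual summand; both follow from $\sum t_i = 1$ together with faithfulness of $\tau$. Everything else is a direct application of the two-summand self-symmetry property.
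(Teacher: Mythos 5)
Your proof is correct and follows essentially the same approach as the paper's: recursively use the self-symmetry isomorphisms to peel off mutually orthogonal central projections $p_i$ with $\tau(p_i)=t_i$ and $Ap_i\cong A$, then invoke normality and faithfulness of $\tau$ to conclude $\sum_i p_i = 1_A$. You spell out the relative-weight bookkeeping and the finite/infinite split more explicitly, but it is the same argument.
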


\begin{proof}
	Using the pair of isomorphisms in Definition \ref{def:self_symmetric}, we can recursively choose mutually orthogonal central projections $\{p_i\}$ in $A$ with $\tau(p_i) = t_i$ and $(Ap_i, t_i^{-1}\tau|_{Ap_i}) \cong (A, \tau)$. Then we have $\sum_{i \in I} p_i = 1_A$ since
	\[
		\tau(\sum_{i \in I} p_i) = \sum_{i \in I} \tau(p_i) = \sum_{i \in I} t_i = 1,
	\]
	and thus
	\[
		A \cong \bigoplus_{i \in I} Ap_i \cong \bigoplus_{i \in I} \underset{t_i}{A_i}.
	\]
\end{proof}

\subsection{Diffuse abelian tracial von Neumann algebras}

See~\cite{abvNa} for a comprehensive account of abelian von Neumann algebras. It contains classification results of \textit{von Neumann measure algebras} (von Neumann algebras with a faithful normal semi-finite weight) that enable us to prove that every diffuse abelian tracial von Neumann algebra is self-symmetric (see Theorem \ref{thm:abvNa} below). 

\begin{defi}[Section 9.1 in \cite{abvNa}] \label{def:a_kappa}
	Let $\kappa$ be a cardinal. Let $(\mathcal A_\kappa, \tau_\kappa)$ denote the tensor product of the family of von Neumann probability algebras $\{(A_i,\tau_i)\}_{i \in \kappa}$, where each $(A_i,\tau_i)$ is the tracial von Neumann algebra
	\[
		\underset{1/2}{\mathbb C} \oplus \underset{1/2}{\mathbb C}.
	\]
	In particular, $\tau_\kappa$ is a normal faithful tracial state on $\mathcal A_\kappa$.
\end{defi}

\begin{defi}[Definition 9.7 in \cite{abvNa}]
	Let $A$ be an abelian von Neumann algebra. We denote by $\mathfrak{k}(A)$ the smallest cardinality of a set $P$ of projections in $A$ not containing $1_A$ such that $P \cup \{1_A\}$ generates $A$ as a von Neumann algebra. 
\end{defi}

\begin{defi}[Definition 9.9 in \cite{abvNa}]
	Let $\kappa$ be a cardinal. We say that an abelian von Neumann algebra $A$ is \textit{$\kappa$-homogeneous} if for any non-zero projection $p \in A$, we have $\kappa = \mathfrak{k}(A) = \mathfrak{k}(Ap)$. 

\end{defi}

The following is a special case of Theorem 9.16 in \cite{abvNa}.
\begin{thm}[] \label{thm:k-homo_implies_isomo}
Any two 
$\kappa$-homogeneous abelian von Neumann measure algebras equipped with normal, faithful, tracial states
are isomorphic, via a trace-preserving von Neumann algebra isomorphism.
\end{thm}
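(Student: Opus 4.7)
The plan is to reduce this to Maharam's classification theorem for measure algebras. By the standard duality (Gelfand–Naimark together with the Riesz representation theorem), an abelian von Neumann algebra equipped with a normal faithful tracial state is canonically identified with $L^\infty(X,\mu)$ for some probability measure space $(X,\mu)$, viewed as a probability measure algebra of equivalence classes of measurable sets modulo null sets. Isomorphism of tracial abelian von Neumann algebras corresponds to isomorphism of the underlying probability measure algebras. Thus the theorem becomes a statement about two probability measure algebras being isomorphic whenever they have the same "size parameter" $\kappa$ uniformly on every positive-measure piece.

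Next I would translate $\kappa$-homogeneity into the measure-theoretic language. The quantity $\mathfrak{k}(A)$, the minimum cardinality of a set of projections (together with $1_A$) generating $A$ as a von Neumann algebra, coincides with the \emph{Maharam type} of the corresponding measure algebra, namely the minimum cardinality of a $\sigma$-subalgebra dense in the metric topology induced by $\mu$. The hypothesis $\mathfrak{k}(A) = \mathfrak{k}(Ap) = \kappa$ for every nonzero projection $p$ is then precisely the statement that the measure algebra has Maharam type $\kappa$ and every positive-measure restriction also has Maharam type $\kappa$, i.e.\ the algebra is \emph{Maharam-homogeneous of type} $\kappa$.

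Finally, I would invoke Maharam's theorem: any two atomless probability measure algebras that are Maharam-homogeneous of the same type $\kappa$ are measure-algebra isomorphic; and in fact both are isomorphic to the measure algebra of $\{0,1\}^\kappa$ with the standard product of $(1/2,1/2)$-Bernoulli measures. Translated back through the duality, $\{0,1\}^\kappa$ with product measure corresponds to the $\kappa$-fold tensor product of copies of $\underset{1/2}{\mathbb{C}} \oplus \underset{1/2}{\mathbb{C}}$, which is $(\mathcal{A}_\kappa,\tau_\kappa)$ from Definition \ref{def:a_kappa}. Since a trace-preserving $\ast$-isomorphism of abelian von Neumann algebras is the same data as a measure-algebra isomorphism, any two $\kappa$-homogeneous abelian von Neumann measure algebras are trace-preservingly isomorphic.

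The main technical obstacle is the identification of $\mathfrak{k}(\cdot)$ with Maharam type and the verification that the algebraic notion of homogeneity in Definition 9.9 of \cite{abvNa} matches Maharam homogeneity of the associated measure algebra; once that bridge is in place the conclusion is immediate from Maharam's theorem. In practice, of course, the statement is exactly Theorem 9.16 in \cite{abvNa}, so the shortest honest argument is to quote it; the sketch above is the content of that reference's proof.
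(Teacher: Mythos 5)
Your proposal is correct and matches the paper's handling of this statement: the paper simply introduces it as a special case of Theorem 9.16 in \cite{abvNa} and offers no further proof, which is precisely the ``shortest honest argument'' you identify at the end. Your sketch of the underlying reasoning --- identifying $\mathfrak{k}(\cdot)$ with Maharam type, observing that $\kappa$-homogeneity (for $\kappa\ge1$) forces atomlessness so Maharam's classification of homogeneous probability measure algebras applies, and noting that the resulting measure-algebra isomorphism is measure-preserving, hence corresponds to a trace-preserving $*$-isomorphism --- is an accurate summary of the content behind that citation.
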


\begin{thm}[Proposition 9.18 in \cite{abvNa}] \label{thm:prop_of_A_kappa}
	For every infinite cardinal $\kappa$, the von Neumann algebra $\mathcal A_\kappa$ (Definition \ref{def:a_kappa}) is abelian, $\sigma$-finite, diffuse, and $\kappa$-homogeneous.
\end{thm}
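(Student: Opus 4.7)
The plan is to verify the four listed properties separately. Throughout, write $e_i \in \mathcal{A}_\kappa$ for the projection coming from $\underset{1/2}{\mathbb{C}} \oplus 0$ in the $i$-th tensor factor, so that $\tau_\kappa(e_i) = 1/2$ and the $e_i$ mutually commute. The $*$-algebraic tensor product of the factors $\mathbb{C} \oplus \mathbb{C}$ is commutative, so its weak closure $\mathcal{A}_\kappa$ is abelian; the existence of the faithful normal tracial state $\tau_\kappa$ immediately gives $\sigma$-finiteness.

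For diffuseness, I would show that any nonzero projection $p$ splits into two nonzero subprojections. By abelianness, $p = p e_i + p(1 - e_i)$ for every $i \in \kappa$. If both summands are nonzero for some $i$, we are done. Otherwise, for every $i$ there is $f_i \in \{e_i, 1-e_i\}$ with $p \leq f_i$, so $p \leq \prod_{i \in F} f_i$ for every finite $F \subset \kappa$; taking traces gives $\tau_\kappa(p) \leq 2^{-|F|}$, which forces $\tau_\kappa(p) = 0$ since $\kappa$ is infinite, contradicting $p \neq 0$.

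For $\kappa$-homogeneity, the upper bound $\mathfrak{k}(\mathcal{A}_\kappa) \leq \kappa$ is immediate because $\{e_i : i \in \kappa\}$ together with $1$ generates $\mathcal{A}_\kappa$ as a von Neumann algebra; for any nonzero projection $p$, the compressions $\{p e_i : i \in \kappa\}$ together with $p$ likewise generate $\mathcal{A}_\kappa p$. For the matching lower bound on $\mathcal{A}_\kappa$ itself, the cleanest approach is an $L^2$-dimension count. The characters $\chi_S := \prod_{i \in S}(2 e_i - 1)$, indexed by finite $S \subset \kappa$, satisfy $\chi_S \chi_T = \chi_{S \triangle T}$ and $\tau_\kappa(\chi_R) = 0$ for nonempty $R$, hence form an orthonormal set in $L^2(\mathcal{A}_\kappa, \tau_\kappa)$ of cardinality $\kappa$; on the other hand, the von Neumann algebra generated by any family $P$ of projections contains a weakly dense $*$-subalgebra of cardinality at most $|P| + \aleph_0$, so its GNS closure has Hilbert dimension at most $|P| + \aleph_0$, forcing $|P| \geq \kappa$.

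The main obstacle is transporting this lower bound from $\mathcal{A}_\kappa$ to each corner $\mathcal{A}_\kappa p$. I would exploit the infinite tensor-product structure: when $\kappa$ is uncountable, since $p \in L^2(\mathcal{A}_\kappa)$ has $\ell^2$-summable Fourier coefficients, only countably many indices $K_0 \subset \kappa$ appear in its expansion, and $J := \kappa \setminus K_0$ has $|J| = \kappa$. For nonempty finite $S \subset J$, $\chi_S$ lies in an algebra generated by tensor factors disjoint from those supporting $p$, so the trace factorizes, $\tau_\kappa(p \chi_S) = \tau_\kappa(p) \tau_\kappa(\chi_S) = 0$, and the elements $p \chi_S$ are pairwise orthogonal in $L^2(\mathcal{A}_\kappa p)$ with respect to the normalized trace; this gives $\kappa$ orthonormal vectors and the $L^2$-dimension argument transfers verbatim. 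In the countable case $\kappa = \aleph_0$, $\mathcal{A}_\kappa p$ is diffuse abelian (a minimal projection in the corner would be a minimal subprojection of $p$ in $\mathcal{A}_\kappa$), hence has $L^2$ of Hilbert dimension $\aleph_0$, giving $\mathfrak{k}(\mathcal{A}_\kappa p) \geq \aleph_0$ by the same count. Combining both cases completes $\kappa$-homogeneity.
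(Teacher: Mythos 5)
The paper does not give a proof of this statement; it simply imports it as Proposition~9.18 of the cited reference \cite{abvNa}, so there is no internal argument to compare against. Your write-up is a self-contained proof, and its overall structure is sound: abelianness from the commutative algebraic tensor product, $\sigma$-finiteness from the faithful normal state, diffuseness via the trace estimate $\tau_\kappa(p)\le 2^{-|F|}$, the upper bound $\mathfrak{k}\le\kappa$ from the explicit generators $\{e_i\}$, and the lower bound via the orthonormal characters $\chi_S$ combined with the cardinality bound on a weakly dense $\mathbb{Q}[i]$-$*$-subalgebra. The uncountable-corner trick of restricting to indices $J=\kappa\setminus K_0$ disjoint from the (countable) support of the Fourier expansion of $p$, so that $\tau_\kappa(p\chi_S)=\tau_\kappa(p)\tau_\kappa(\chi_S)$, is correct and gives the required $\kappa$ many orthogonal vectors $p\chi_S$ in $L^2(\mathcal A_\kappa p)$.

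There is, however, a gap in the case $\kappa=\aleph_0$. Your cardinality count shows only that the Hilbert dimension of $L^2$ is at most $|P|+\aleph_0$; when $\kappa=\aleph_0$ and $P$ is finite, this bound is $\aleph_0$, which does not contradict $\dim L^2(\mathcal A_{\aleph_0})=\aleph_0$ (or $\dim L^2(\mathcal A_{\aleph_0}p)=\aleph_0$). So ``the same count'' does not by itself force $|P|\ge\aleph_0$, either for $\mathcal A_{\aleph_0}$ or for its corners. The fix is easy but needs to be said: if $P$ is a \emph{finite} set of commuting projections, then $W^*(P\cup\{1\})$ is a finite-dimensional abelian von Neumann algebra (generated by a finite Boolean algebra of projections), which is not diffuse and has finite-dimensional $L^2$; this contradicts the diffuseness you have already established, hence $P$ must be infinite and $\mathfrak{k}\ge\aleph_0$. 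Inserting that one observation, for both $\mathcal A_{\aleph_0}$ and $\mathcal A_{\aleph_0}p$, closes the argument; the uncountable case needs no change.
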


\begin{thm}[Theorem 9.20 in \cite{abvNa}] \label{thm:Maharam}
	Let $(A,\tau)$ be a diffuse abelian von Neumann measure algebra. Then there are families $(\kappa_i)_{i \in I}$ and $(\lambda_i)_{i \in I}$, where each $\kappa_i$ is an infinite cardinal and each $\lambda_i$ is a positive real number, such that
	\[
		(A,\tau) \cong \bigoplus_{i \in I} (\mathcal A_{\kappa_i}, \lambda_i \tau_{\kappa_i}).
	\]
\end{thm}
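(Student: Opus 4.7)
The plan is to decompose the diffuse abelian algebra $(A,\tau)$ into an orthogonal direct sum of $\kappa$-homogeneous pieces of finite weight, and then identify each piece with a rescaled copy of $\mathcal A_\kappa$ via Theorem \ref{thm:k-homo_implies_isomo}.

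First I would establish the following structural lemma: every non-zero projection $q \in A$ dominates a non-zero $q' \le q$ such that $Aq'$ is $\kappa$-homogeneous for some infinite cardinal $\kappa$. The set of cardinals $\{\mathfrak{k}(Ap): 0\ne p\le q\}$ is well-ordered, so let $\kappa$ be its minimum and pick $q'$ realizing it. For any $0 \ne p \le q'$, the bound $\mathfrak{k}(Ap) \le \mathfrak{k}(Aq')$ follows by compressing a generating set of projections for $Aq'$ by $p$ to obtain a generating set for $Ap$ of no larger cardinality, while $\mathfrak{k}(Ap) \ge \kappa$ holds by minimality of $\kappa$. Hence $Aq'$ is $\kappa$-homogeneous, and $\kappa$ must be infinite because diffuseness precludes any $Ap$ from being finitely generated by projections.

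With the lemma in hand, I would use semi-finiteness of $\tau$ to cover $1_A$ by an orthogonal family of projections of finite trace, then apply Zorn's lemma within each such projection to extract a maximal orthogonal family $\{p_i\}_{i\in I}$ of non-zero projections with $\sum_i p_i = 1$, each satisfying $\tau(p_i)<\infty$ and $Ap_i$ being $\kappa_i$-homogeneous; maximality combined with the structural lemma applied to any would-be complement forces the sum to be $1$. Setting $\lambda_i = \tau(p_i)$, Theorems \ref{thm:prop_of_A_kappa} and \ref{thm:k-homo_implies_isomo} applied to the $\kappa_i$-homogeneous probability algebras $(Ap_i, \lambda_i^{-1}\tau|_{Ap_i})$ and $(\mathcal A_{\kappa_i}, \tau_{\kappa_i})$ provide a trace-preserving isomorphism, which after reintroducing the scalar gives $(Ap_i, \tau|_{Ap_i}) \cong (\mathcal A_{\kappa_i}, \lambda_i \tau_{\kappa_i})$. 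Assembling over $i \in I$ yields the claimed decomposition. The main obstacle I expect is the structural lemma, specifically the monotonicity $\mathfrak{k}(Ap) \le \mathfrak{k}(Aq')$ under compression; once that is in place, everything else reduces to a standard exhaustion argument combined with the uniqueness theorem for $\kappa$-homogeneous probability algebras.
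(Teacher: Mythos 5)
The paper does not actually prove this theorem---it is cited directly as Theorem~9.20 of~\cite{abvNa}, so there is no in-paper proof to compare against. Your reconstruction of the Maharam-type decomposition is correct and matches the standard argument: the structural lemma (every nonzero $q$ dominates a $\kappa$-homogeneous corner $Aq'$, found by taking $\kappa$ to be the minimum of $\mathfrak{k}(Ap)$ over $0\ne p\le q$ and then using monotonicity of $\mathfrak{k}$ under compression) is sound, as the compression $b\mapsto bp$ is a normal surjective $*$-homomorphism $Aq'\to Ap$ carrying a generating set of projections to one of no larger cardinality; diffuseness correctly forces $\kappa$ to be infinite since a finitely-projection-generated abelian algebra is finite-dimensional; and the Zorn exhaustion using semi-finiteness of $\tau$, followed by Theorems~\ref{thm:prop_of_A_kappa} and~\ref{thm:k-homo_implies_isomo} applied to the renormalized probability algebras $(Ap_i,\lambda_i^{-1}\tau|_{Ap_i})$, assembles the direct sum as claimed.
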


\begin{thm} \label{thm:abvNa}
	Let $(A,\tau)$ be a diffuse abelian tracial von Neumann algebra. Then $(A,\tau)$ is self-symmetric. 
\end{thm}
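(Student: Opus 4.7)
The plan is to reduce to the $\kappa$-homogeneous case via the Maharam classification (Theorem \ref{thm:Maharam}) and then invoke the uniqueness statement in Theorem \ref{thm:k-homo_implies_isomo}. Since $(A,\tau)$ is a diffuse abelian tracial von Neumann algebra, it is in particular a diffuse abelian von Neumann measure algebra, so Theorem \ref{thm:Maharam} provides an isomorphism
\[
(A,\tau) \cong \bigoplus_{i \in I} (\mathcal{A}_{\kappa_i}, \lambda_i \tau_{\kappa_i})
\]
with each $\kappa_i$ infinite. Fix $t \in (0,1)$; abelianness means every projection is automatically central, so it suffices to produce a single projection $p \in A$ of trace $t$ whose corners recover $A$ after rescaling.

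The key idea is to split each Maharam summand separately in the same proportion $t : (1-t)$. For each $i \in I$, use diffuseness of $\mathcal{A}_{\kappa_i}$ (Theorem \ref{thm:prop_of_A_kappa}) to pick a projection $p_i \in \mathcal{A}_{\kappa_i}$ with $\tau_{\kappa_i}(p_i) = t$, and let $p \in A$ correspond to $\bigoplus_i p_i$, so $\tau(p) = \sum_i \lambda_i t = t$. Now observe that $(\mathcal{A}_{\kappa_i} p_i,\; t^{-1} \tau_{\kappa_i}\restrict_{\mathcal{A}_{\kappa_i} p_i})$ is a diffuse abelian von Neumann measure algebra, and is $\kappa_i$-homogeneous directly from the definition of $\kappa_i$-homogeneity applied to the non-zero projection $p_i$. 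Theorem \ref{thm:k-homo_implies_isomo} then yields a trace-preserving isomorphism
\[
(\mathcal{A}_{\kappa_i} p_i,\; t^{-1} \tau_{\kappa_i}\restrict_{\mathcal{A}_{\kappa_i} p_i}) \cong (\mathcal{A}_{\kappa_i}, \tau_{\kappa_i}),
\]
and symmetrically for $1 - p_i$ with the rescaling $(1-t)^{-1}$.

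Assembling these piece-by-piece isomorphisms across $i \in I$ gives
\[
(Ap,\; t^{-1} \tau\restrict_{Ap}) \cong \bigoplus_{i \in I} (\mathcal{A}_{\kappa_i} p_i,\; \lambda_i t^{-1} \tau_{\kappa_i}\restrict_{\mathcal{A}_{\kappa_i} p_i}) \cong \bigoplus_{i \in I} (\mathcal{A}_{\kappa_i}, \lambda_i \tau_{\kappa_i}) \cong (A,\tau),
\]
and likewise for $A(1-p)$, verifying Definition \ref{def:self_symmetric}. The only real point to check is that the corners $\mathcal{A}_{\kappa_i} p_i$ genuinely inherit $\kappa_i$-homogeneity, but this is built into the definition of $\kappa$-homogeneity; once that is in hand, the argument is essentially a bookkeeping exercise combining Maharam's classification with the rigidity of homogeneous measure algebras, and no serious obstacle arises.
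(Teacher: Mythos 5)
Your argument is correct and follows the paper's own proof essentially step for step: decompose $(A,\tau)$ via the Maharam classification (Theorem~\ref{thm:Maharam}), split each summand $\mathcal{A}_{\kappa_i}$ by a projection $p_i$ of normalized trace $t$, check that the corner $\mathcal{A}_{\kappa_i} p_i$ is again $\kappa_i$-homogeneous, and invoke Theorem~\ref{thm:k-homo_implies_isomo} to reassemble. The one spot where the paper is more explicit than you are is in verifying that the corner is $\kappa_i$-homogeneous, which requires applying the definition to \emph{every} non-zero projection $q \le p_i$ (using $(\mathcal{A}_{\kappa_i} p_i)q = \mathcal{A}_{\kappa_i} q$), not just to $p_i$ itself; this is exactly the ``bookkeeping'' you allude to, and it goes through.
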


\begin{proof}
	Write $(A,\tau)$ in the sense of Theorem \ref{thm:Maharam} above. Since $\tau$ is a faithful normal tracial state, $I$ must be countable, and $\sum_{i \in I} \lambda_i = 1$. Fix $0 < t < 1$, we need to show there exists a projection $p \in A$ with $\tau(p) = t$ and
	\begin{equation} \label{eq:self_sym}
		(Ap, t^{-1}\tau|_{Ap}) \cong (A, \tau) \cong (A(1-p), (1-t)^{-1}\tau|_{A(1-p)}).
	\end{equation}
	
	For each $i \in I$, we take a projection $p_i \in \mathcal A_{\kappa_i}$ with $\tau_{\kappa_i}(p_i) = t$. Now note for all non-zero projections $q \in \mathcal A_{\kappa_i} p_i$, 
	\[
		\mathfrak{k}((\mathcal A_{\kappa_i} p_i)q) = \mathfrak{k}(\mathcal A_{\kappa_i}q) = \mathfrak{k}(\mathcal A_{\kappa_i}) = \mathfrak{k}(\mathcal A_{\kappa_i} p_i) = \kappa_i, 
	\]
	because $\mathcal A_{\kappa_i}$ is $\kappa_i$-homogeneous, and $q$ and $p_i$ are both non-zero projections in $\mathcal A_{\kappa_i}$. This implies $\mathcal A_{\kappa_i} p_i$ is $\kappa_i$-homogeneous, and $(\mathcal A_{\kappa_i} p_i, t^{-1} \lambda_i \tau_{\kappa_i}|_{\mathcal A_{\kappa_i} p_i})$ is a $\kappa_i$-homogeneous von Neumann measure algebra with 
	\[
		t^{-1} \lambda_i \tau_{\kappa_i}(p_i) = \lambda_i = \lambda_i \tau_{\kappa_i}(1_{\kappa_i}),
	\]
	where $p_i$ is the identity in $\mathcal A_{\kappa_i} p_i$, and $1_{\kappa_i}$ is the identity in $\mathcal A_{\kappa_i}$. Then by Theorem \ref{thm:k-homo_implies_isomo} and \ref{thm:prop_of_A_kappa}, we have a tracial isomorphism 
	\[
		(\mathcal A_{\kappa_i} p_i, t^{-1} \lambda_i \tau_{\kappa_i}|_{\mathcal A_{\kappa_i} p_i}) \cong (\mathcal A_{\kappa_i}, \lambda_i \tau_{\kappa_i}).
	\]
	
	Doing so for all $i \in I$, and letting $p = \sum_{i \in I} p_i$, we obtain in the direct sum
	\[
		(Ap, t^{-1}\tau|_{Ap}) \cong \bigoplus_{i \in I} (\mathcal A_{\kappa_i} p_i, \lambda_i t^{-1} \tau_{\kappa_i}) \cong \bigoplus_{i \in I} (\mathcal A_{\kappa_i}, \lambda_i \tau_{\kappa_i}) \cong (A, \tau),
	\]
	which proves the first isomorphism in (\ref{eq:self_sym}). Similarly, we can obtain the second isomorphism in (\ref{eq:self_sym}) and thus $(A,\tau)$ is self-symmetric. 
\end{proof}

\begin{rem}
	This theorem provides us with many examples of self-symmetric tracial von Neumann algebras, including the group von Neumann algebras of all infinite discrete abelian groups, regardless of the cardinality.
(See, for example, Proposition 5.1 of \cite{FDIM} for a proof of the well-known fact that all such algebras are diffuse.)

\end{rem}

\section{Well-definedness and properties of \texorpdfstring{$\mathcal{F}_{s,r}$}{F s,r}}

Throughout this section, unless otherwise noted, we fix a self-symmetric tracial von Neumann algebra $A$. We denote $\mathcal{F}_{n,r} = A^{*n} * L\mathbb{F}_r$ for $n \in \mathbb{N}$ and $r \in (1, \infty]$. Since the free product of any diffuse tracial von Neumann algebra and any non-trivial tracial von Neumann algebra is a II$_1$ factor (see Corollary 5.3.8 in \cite{II1} for example). Our plan is to define $\mathcal F_{s,r}$ for more general $s$ and $r$ by rescalings of these $\mathcal F_{n,r}$. We first compute $(\mathcal{F}_{n,r})^{1/k}$.

\begin{lemma} \label{lem:A_nk}
	Let $n, k \in \mathbb{N}$ and $r \in (1, \infty]$, then
	\[
		(A^{*n} * L\mathbb{F}_r)^{1/k} \cong A^{*(nk)} * L\mathbb{F}_{(n+r-1)k^2 - nk +1},
	\]
	in other words
	\[
		(\mathcal{F}_{n,r})^{1/k} \cong \mathcal{F}_{nk, (n+r-1)k^2 - nk +1}.
	\]
\end{lemma}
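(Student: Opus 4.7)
The plan is to adapt the classical ``matrix unit plus free dimension'' technology for interpolated free group factors from \cite{IFGF}, \cite{Rad94}, and \cite{FDIM}, combined with the self-symmetry of $A$ via Lemma \ref{lem:direct_sum_self_symm}, which lets us split each of the $n$ copies of $A$ into $k$ equal pieces each isomorphic to $A$.

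First, I would apply Lemma \ref{lem:direct_sum_self_symm} to each free factor in $M := A^{(1)} * \cdots * A^{(n)} * L\mathbb{F}_r$, producing for each $j \in \{1,\ldots,n\}$ a partition of unity $\{e_i^{(j)}\}_{i=1}^{k}$ of $A^{(j)}$ by central projections with $\tau(e_i^{(j)}) = 1/k$ and $A^{(j)} e_i^{(j)} \cong A$ (as tracial von Neumann algebras after rescaling the trace). Next, using the standard amplification isomorphism $L\mathbb{F}_r \cong M_k(\mathbb{C}) \otimes L\mathbb{F}_{1 + (r-1)k^2}$ (equivalent to the rescaling formula $(L\mathbb{F}_s)^{k} = L\mathbb{F}_{1 + (s-1)/k^2}$), I would extract inside $L\mathbb{F}_r$ a system of $k \times k$ matrix units $(f_{ij})$ with $\tau(f_{ii}) = 1/k$ together with a residual interpolated free group factor $L\mathbb{F}_{1 + (r-1)k^2}$ commuting with the $(f_{ij})$. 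Set $p := f_{11}$; this is the compression projection.

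I would then transport each piece $A^{(j)} e_i^{(j)}$ into the corner $pMp$. Using the freeness of the matrix units $(f_{ij}) \subset L\mathbb{F}_r$ from the $A^{(j)}$, I would build for each pair $(i,j)$ a partial isometry $v_i^{(j)} \in M$ with $v_i^{(j)} (v_i^{(j)})^* = p$ and $(v_i^{(j)})^* v_i^{(j)} = e_i^{(j)}$, giving a trace-preserving unital embedding of $A^{(j)} e_i^{(j)} \cong A$ into $pMp$ via $a \mapsto v_i^{(j)} a (v_i^{(j)})^*$. This yields $nk$ unital subalgebras of $pMp$, each isomorphic to $A$. The key technical step, which I expect to be the main obstacle, is to verify that these $nk$ copies are jointly freely independent in $pMp$; this follows the bookkeeping pattern of \cite{FDIM}: freeness of the $A^{(j)}$ from each other and from $L\mathbb{F}_r$ in $M$, combined with freeness of the partial isometries (built from the matrix-unit structure inside $L\mathbb{F}_r$) from the $A^{(j)}$, implies that conjugation produces a free family in the corner.

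The remaining free content of $pMp$ not captured by these $nk$ copies of $A$ is then an interpolated free group factor $L\mathbb{F}_{r'}$, and its parameter is forced by the standard free-dimension bookkeeping: $M$ has free dimension $n + r$, so $pMp$ has free dimension $1 + (n+r-1)k^2$; matching this against $nk + r'$ yields $r' = (n+r-1)k^2 - nk + 1$, as required. The main obstacle, as noted, is the careful matrix-unit bookkeeping and verification of free independence of the $nk$ transported copies of $A$; this is standard technology in the interpolated free group factor literature but needs to be arranged cleanly, and with care given that $A$ need not be separable.
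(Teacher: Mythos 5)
Your proposal takes a genuinely different route from the paper's proof. The paper compresses by a central projection $p_1$ living \emph{inside one copy of $A$} (not inside $L\mathbb{F}_r$), and then \emph{iteratively} peels off one piece $Ap_j$ at a time using a previously established one-step freeness lemma (Claim~4.3 of \cite{NON_ISOM}, which is essentially Proposition~2.4 of \cite{FDIM}), moving the corner from $p_j$ to $p_{j+1}$ after each step since corners of a II$_1$ factor depend only on their trace; once the $n=1$ case is done this way, the paper inducts on $n$. Your proposal instead compresses by $p=f_{11}\in L\mathbb{F}_r$ and tries to transport all $nk$ pieces $A^{(j)}e_i^{(j)}$ simultaneously into $pMp$ and verify joint freeness in one shot. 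This global, Radulescu-style matrix-model approach is plausible in spirit, but the paper's iterative route is cleaner precisely because the freeness verification is reduced to repeated applications of a single known lemma rather than a new simultaneous freeness computation.

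There is, moreover, a genuine gap at the step you yourself flag as the main obstacle. You assert that joint freeness of the $nk$ conjugated copies follows from ``freeness of the partial isometries (built from the matrix-unit structure inside $L\mathbb{F}_r$) from the $A^{(j)}$,'' but this cannot be right as stated: a partial isometry $v_i^{(j)}$ with $(v_i^{(j)})^*v_i^{(j)}=e_i^{(j)}\in A^{(j)}$ and $v_i^{(j)}(v_i^{(j)})^*=f_{11}\in L\mathbb{F}_r$ necessarily straddles both free factors, and in particular cannot be free from $A^{(j)}$ (freeness would be violated already by the identity $(v_i^{(j)})^*v_i^{(j)}-e_i^{(j)}=0$). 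The intended partial isometries would have to be built from the polar decomposition of products like $f_{11}e_i^{(j)}$ (as in Lemma~\ref{lem:freeproj}), and proving that the resulting $nk$ conjugated copies of $A$ are jointly free from each other and from the residual interpolated free group factor is exactly the Voiculescu-style alternating-centered-product computation that Claim~4.3 of \cite{NON_ISOM} / Proposition~2.4 of \cite{FDIM} package in the one-piece-at-a-time setting. Calling it ``bookkeeping'' undersells this; it is the mathematical content of the lemma. The free-dimension count at the end of your argument only tells you what the residual \emph{must be if} the decomposition holds; it cannot substitute for the freeness proof.
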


\begin{proof}
	We first prove the case $n=1$ and proceed by induction on $n$. From the self-symmetry of $A$, we take projections $p_1, \ldots, p_k \in A$, where $\sum_{j=1}^k p_j = 1_A$, $\tau(p_j)=1/k$, and $(Ap_j, k\tau|_{Ap_j}) \cong (A, \tau)$ for all $1 \leq j \leq k$. Then
	\begin{align*}
		(A * L\mathbb{F}_r)^{1/k} & \cong p_1((Ap_1 \oplus \cdots \oplus Ap_k) * L\mathbb{F}_r)p_1 \\
		& \cong Ap_1 * p_1((\mathbb{C}p_1 \oplus Ap_2 \oplus \cdots \oplus Ap_k) * L\mathbb{F}_r)p_1 \\
		& \cong Ap_1 * p_2((\mathbb{C}p_1 \oplus Ap_2 \oplus \cdots \oplus Ap_k) * L\mathbb{F}_r)p_2 \\
		& \cong \cdots \\
		& \cong Ap_1 * \cdots Ap_k * ((\mathbb Cp_1 \oplus \cdots \oplus \mathbb Cp_k) * L\mathbb F_r)^{1/k} \\
		& \cong A^{*k} * (L\mathbb{F}_{\frac{k(k-1)}{k^2}+r})^{1/k} \cong A^{*k} * L\mathbb{F}_{rk^2 - k + 1}.
	\end{align*}
follows from Claim 4.3 in \cite{NON_ISOM} and Proposition 2.4 in \cite{FDIM}. Next suppose the lemma holds for some $n \in \mathbb{N}$. Then 
	\begin{align*}
		(A^{*(n+1)} * L\mathbb{F}_r)^{1/k} & = (A * A^{*n} * L\mathbb{F}_r)^{1/k} \\
		& \cong p_1((Ap_1 \oplus \cdots \oplus Ap_k) * A^{*n} * L\mathbb{F}_r)p_1 \\
		& \cong Ap_1 * \cdots * Ap_k * p_1((\mathbb{C}p_1 \oplus \cdots \oplus \mathbb{C}p_k) * A^{*n} * L\mathbb{F}_r)p_1 \\
		& \cong A^{*k} * (A^{*n} * L\mathbb{F}_{\frac{k(k-1)}{k^2}+r})^{1/k} \\
		& \cong A^{*k} * (A^{*(nk)} * L\mathbb{F}_{(n+r)k^2 - (n+1)k + 1}) \\
		& \cong A^{*(n+1)k} * L\mathbb{F}_{(n+r)k^2 - (n+1)k + 1}
	\end{align*}
	follows similarly by associativity and commutativity of the free product, and the inductive hypothesis. 
	
\end{proof}

The next lemma is key to well-definedness:

\begin{lemma} \label{lem:isom_compre_cond}
	Let $n,m \in \mathbb{N}$, $r, u \in (1, \infty]$, and $t,s \in (0, \infty)$ satisfy
	\[
		\frac{n}{t} = \frac{m}{s} \quad \text{ and } \quad \frac{n+r-1}{t^2} = \frac{m+u-1}{s^2}
	\]
	then
	\[
		(\mathcal{F}_{n,r})^t \cong (\mathcal{F}_{m,u})^s.
	\]
\end{lemma}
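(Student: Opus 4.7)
The plan is to reduce the claim to Lemma \ref{lem:A_nk} by compressing both sides of the desired isomorphism by a common scalar chosen so that each side becomes an integer-reciprocal compression, which Lemma \ref{lem:A_nk} can evaluate directly. First I would reparametrize the hypotheses: the relation $n/t = m/s$ says $t/n = s/m$, so writing $\alpha := t/n = s/m > 0$ we have $t = n\alpha$ and $s = m\alpha$. Substituting these into $(n+r-1)/t^2 = (m+u-1)/s^2$ and clearing denominators turns the second hypothesis into the purely algebraic identity
\[
    m^2(n+r-1) \;=\; n^2(m+u-1).
\]

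Next I would use the standard composition rule $(M^a)^b \cong M^{ab}$ for rescalings of a II$_1$ factor to compress each side by a further factor of $1/(mn\alpha)$, producing
\[
    \bigl((\mathcal F_{n,r})^{n\alpha}\bigr)^{1/(mn\alpha)} \cong (\mathcal F_{n,r})^{1/m},
    \qquad
    \bigl((\mathcal F_{m,u})^{m\alpha}\bigr)^{1/(mn\alpha)} \cong (\mathcal F_{m,u})^{1/n}.
\]
The right-hand sides are integer-reciprocal compressions, so Lemma \ref{lem:A_nk} identifies them with $\mathcal F_{nm,\,(n+r-1)m^2 - nm + 1}$ and $\mathcal F_{mn,\,(m+u-1)n^2 - mn + 1}$ respectively. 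Their first indices obviously agree, and their second indices agree by exactly the algebraic identity $m^2(n+r-1) = n^2(m+u-1)$ established in the previous step. Hence the two $1/(mn\alpha)$-compressions are isomorphic.

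Finally, rescaling back by the inverse factor $mn\alpha$ (again via $(M^a)^b \cong M^{ab}$) upgrades the isomorphism of the compressions to the desired isomorphism $(\mathcal F_{n,r})^t \cong (\mathcal F_{m,u})^s$. I do not expect any serious obstacle here; the only point to be careful with is that the rescaling calculus requires each intermediate algebra to be a II$_1$ factor, but this is automatic from the remark recalled just before Lemma \ref{lem:A_nk} that the free product of a diffuse algebra with any non-trivial algebra is a II$_1$ factor. The essential content of the lemma is therefore the observation that the common compression $1/(mn\alpha)$ simultaneously brings both sides into the regime where Lemma \ref{lem:A_nk} applies, together with the arithmetic that the two resulting parameter pairs coincide under the hypotheses.
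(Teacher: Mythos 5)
Your proof is correct and takes essentially the same route as the paper: both reduce to Lemma~\ref{lem:A_nk} via the rescaling calculus $(M^a)^b \cong M^{ab}$ and then check that the hypotheses force the resulting parameter pairs to coincide, i.e.\ $m^2(n+r-1)=n^2(m+u-1)$. The only (cosmetic) difference is your choice of a common compression factor $1/(mn\alpha)$, which lands both sides directly on the integer-reciprocal compressions $(\mathcal F_{n,r})^{1/m}$ and $(\mathcal F_{m,u})^{1/n}$, whereas the paper compresses both by $1/m$ and carries along a residual $t$-rescaling; this makes your bookkeeping slightly cleaner but is not a different argument.
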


\begin{proof}
	Using the previous lemma, we can compute
	\[
		((\mathcal{F}_{n,r})^t)^{1/m} \cong ((\mathcal{F}_{n,r})^{1/m})^{t} \cong (\mathcal{F}_{nm,(n+r-1)m^2-nm+1})^t
	\]
	and
	\[
		((\mathcal{F}_{m,u})^s)^{1/m} \cong ((\mathcal{F}_{m,u})^{1/n})^{t} \cong (\mathcal{F}_{nm,(m+u-1)n^2-nm+1})^t.
	\]
	The right-hand sides are precisely the same by our assumption, so
	\[
		(\mathcal{F}_{n,r})^t \cong M_m(\mathbb{C}) \otimes (\mathcal{F}_{nm,(m+u-1)n^2-nm+1})^t \cong (\mathcal{F}_{m,u})^s. 
	\] 
\end{proof}

\begin{defi} \label{def:IFGF}
	For a self-symmetric tracial von Neumann algebra $A$, for $s \in (0, \infty)$, and $r \in (1-s, \infty]$, we choose $n \in \mathbb{N}$ large enough so that 
	\[
		\frac{s+r-1}{s^2}n^2 - n + 1 > 1
	\]
	and define
	\begin{equation} \label{eq:F_def}
		\mathcal{F}_{s,r}(A) := (A^{*n} * L\mathbb{F}_{\frac{s+r-1}{s^2}n^2 - n + 1})^{n/s}.
	\end{equation}
	When the algebra $A$ is clear from context, we abbreviate the above as $\mathcal{F}_{s,r}$. When $r = \infty$, we use its usual arithmetic in the extended real number system. 
\end{defi}

\begin{rem} \label{rem:def_IFGF}
	This definition gives isomorphic copies of II$_1$ factors regardless of $n$ by Lemma \ref{lem:isom_compre_cond}. It is also elementary to verify that this definition recovers $\mathcal{F}_{s,r} \cong A^{*s} * L\mathbb{F}_{r}$ when $s \in \mathbb{N}$ and $r \in (1, \infty]$, and $\mathcal{F}_{s,0} \cong A^{*s}$ when $s \in \mathbb{N}$ and $s \geq 2$. The first isomorphism is obtained by setting $n = s$ in (\ref{eq:F_def}). The second isomorphism follows from setting $n = 2s$ in (\ref{eq:F_def}), and then taking the $1/2$ compression:
	\[
		(\mathcal F_{s,0})^{1/2} \cong A^{*(2s)} * L\mathbb F_{2s-3} \cong (A^{*s})^{1/2},
	\]
	which follows from (\ref{eq:A_*n^t}).
\end{rem}

The rescaling formula (\ref{eq:F_rescale}) now follows directly from well-definedness. 

\begin{thm}\label{thm:F_rescale}
	Let $s \in (0, \infty)$, $r \in (1-s, \infty]$, and $t \in (0, \infty)$. Then
	\[
 		(\mathcal{F}_{s,r})^t \cong \mathcal{F}_{\frac{s}{t},\frac{s+r-1}{t^2} - \frac{s}{t} + 1}.
 	\]
\end{thm}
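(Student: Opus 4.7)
The plan is to observe that, after unpacking the definition of $\mathcal{F}_{s,r}$ on both sides, the statement reduces to the multiplicativity of compression $(M^a)^b \cong M^{ab}$ for a II$_1$ factor $M$, together with a short algebraic check that the free-group-factor parameter appearing inside the definition is invariant under $(s,r) \mapsto (s/t,(s+r-1)/t^2 - s/t + 1)$.

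First I would set $s' = s/t$ and $r' = \frac{s+r-1}{t^2} - \frac{s}{t} + 1$, and verify the key identity
\[
\frac{s' + r' - 1}{(s')^2} = \frac{s+r-1}{s^2},
\]
which is an immediate computation since $s' + r' - 1 = (s+r-1)/t^2$ and $(s')^2 = s^2/t^2$. A consequence is that the defining inequality $\frac{s+r-1}{s^2}n^2 - n + 1 > 1$ used in Definition~\ref{def:IFGF} for $\mathcal{F}_{s,r}$ coincides with the corresponding inequality for $\mathcal{F}_{s',r'}$, so a single $n \in \mathbb{N}$ works simultaneously for both. Fix such an $n$, and write $\alpha = \frac{s+r-1}{s^2}n^2 - n + 1$.

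Next I would use the definition to compute both sides with that same $n$. On the one hand,
\[
(\mathcal{F}_{s,r})^t \cong \bigl((A^{*n} * L\mathbb{F}_\alpha)^{n/s}\bigr)^t \cong (A^{*n} * L\mathbb{F}_\alpha)^{nt/s},
\]
using $(M^a)^b \cong M^{ab}$ for the II$_1$ factor $M = A^{*n} * L\mathbb{F}_\alpha$ (which is a factor by the diffuseness observation recalled at the start of the section). On the other hand, by the invariance of the free-group parameter established above,
\[
\mathcal{F}_{s',r'} \cong (A^{*n} * L\mathbb{F}_\alpha)^{n/s'} = (A^{*n} * L\mathbb{F}_\alpha)^{nt/s},
\]
and the two right-hand sides agree.

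There is really no obstacle here; the theorem is by design a formal consequence of the well-definedness Lemma~\ref{lem:isom_compre_cond} (which bundled all the genuine work into the identity $(\mathcal{F}_{n,r})^{1/k} \cong \mathcal{F}_{nk,(n+r-1)k^2-nk+1}$ of Lemma~\ref{lem:A_nk}) plus the algebraic bookkeeping above. The only minor thing to mention explicitly is that the exponent $n/s$ in the definition is being composed with $t$ to give $nt/s = n/s'$, and that this is legitimate because compressions of II$_1$ factors compose multiplicatively.
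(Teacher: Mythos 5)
Your proposal is correct and follows essentially the same route as the paper: choose a single $n$ satisfying the defining inequality for both $\mathcal{F}_{s,r}$ and $\mathcal{F}_{s/t,\,(s+r-1)/t^2 - s/t + 1}$ (justified by the invariance $\frac{s'+r'-1}{(s')^2} = \frac{s+r-1}{s^2}$), realize both sides via Definition~\ref{def:IFGF} with that $n$, and observe the compressions coincide since $(n/s)\cdot t = n/s'$. You simply spell out the algebraic check and the multiplicativity of compression that the paper's terse proof leaves implicit.
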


\begin{proof}
	It suffices to choose $n \in \mathbb{N}$ with 
	\[
		\frac{s+r-1}{s^2}n^2 - n + 1 > 1
	\]  
	to realize both sides of the isomorphism above as in (\ref{eq:F_def}). Then it follows 
	\begin{align*}
		\mathcal{F}_{\frac{s}{t},\frac{s+r-1}{t^2} - \frac{s}{t} + 1} & \cong (A^{*n} * L\mathbb{F}_{\frac{s+r-1}{s^2}n^2 - n + 1})^{nt/s} \\
		& \cong (\mathcal{F}_{s,r})^t.
	\end{align*}
\end{proof}

\begin{rem}
	The rescaling formula in particular extends the classical rescaling formula of the interpolated free group factors when $A$ is separable, and the result in Corollary 4.4(1) of \cite{NON_ISOM} when $s \geq 2$ is a natural number, $r=0$, and $t = 1/m$ for some $m \in \mathbb{N}$.
\end{rem}

\begin{thm}
	Let $A$ be a non-separable self-symmetric abelian tracial von Neumann algebra. Let $0 < s \neq s' < \infty$, $r \in (1-s, \infty]$, and $r' \in (1-s', \infty]$. Then
	\[
		\mathcal{F}_{s,r}(A) \not \cong \mathcal{F}_{s',r'}(A).
	\]
\end{thm}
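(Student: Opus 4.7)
The plan is to reduce the claim, via the rescaling formula (Theorem \ref{thm:F_rescale}) and the well-definedness built into Definition \ref{def:IFGF}, to a comparison of factors of the form $A^{*n} * L\mathbb{F}_r$ with distinct integer first parameters, and then to invoke the rigidity of such factors coming from \cite{NON_ISOM}.

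I would argue by contradiction: suppose $\mathcal{F}_{s,r}(A) \cong \mathcal{F}_{s',r'}(A)$ with $s < s'$. Using Definition \ref{def:IFGF} with a common large $N \in \mathbb{N}$, both factors are realized as $(A^{*N} * L\mathbb{F}_{\rho})^{N/s}$ and $(A^{*N} * L\mathbb{F}_{\rho'})^{N/s'}$. Compressing the assumed isomorphism by $s/N$ and applying Theorem \ref{thm:F_rescale} to the right-hand side produces
\[
	A^{*N} * L\mathbb{F}_{\rho} \cong \mathcal{F}_{Ns'/s,\, V}(A)
\]
for an explicit $V = V(s,s',r',N)$. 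When $s'/s = q/p$ is rational in lowest terms, choosing $N$ to be a large multiple of $p$ turns $Ns'/s = Nq/p$ into a positive integer $M$ distinct from $N$, with both $\geq 2$. Unpacking the right-hand side via Definition \ref{def:IFGF} then yields $A^{*N} * L\mathbb{F}_{\rho} \cong A^{*M} * L\mathbb{F}_{W}$ with $\rho, W > 1$ and $N \neq M$. This contradicts the rigidity extracted from \cite{NON_ISOM}: combining Theorem 1.1 there (non-isomorphism of $A^{*K}$ for distinct integers $K \geq 2$, and triviality of each fundamental group) with the compression identity (\ref{eq:A_*n^t}) and Lemma \ref{lem:A_nk}, one sees that the integer $n$ in a factor $A^{*n} * L\mathbb{F}_r$ is forced to be uniquely determined, giving $N = M$, a contradiction.

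The principal obstacle is the case $s'/s \notin \mathbb{Q}$, where no single compression simultaneously brings both $s/t$ and $s'/t$ into $\mathbb{N}$. My plan there is to first take $t = s/n$ for a large integer $n$, reducing the hypothesis to $A^{*n} * L\mathbb{F}_{\alpha_n} \cong \mathcal{F}_{n\alpha,\, \beta_n}(A)$ with $\alpha = s'/s$ irrational, and then to re-realize the right-hand side through Definition \ref{def:IFGF} with an auxiliary large integer $N'$, further compressing so that the right-hand first parameter also becomes integer. The resulting cascade of rescalings, together with the rigidity established in the rational case, should eventually force a commensurability condition on $\alpha$ that contradicts its irrationality. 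Making this final reduction rigorous — cleanly propagating the $\mathbb{N}$-indexed rigidity from \cite{NON_ISOM} through the continuous interpolation — is the technical heart of the argument, and is where the main effort would go.
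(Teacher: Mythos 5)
The paper's proof is a one-liner compared to what you propose: it applies the sans-rank $\rankns$ (the ``non-separable rank'' invariant from \cite{NON_ISOM}) and computes directly that $\rankns(\mathcal{F}_{s,r}(A)) = s\cdot\rankns(A)$, which is nonzero since $A$ is non-separable; hence distinct $s, s'$ give distinct sans-ranks and non-isomorphic factors, for any admissible $r, r'$. Your route is fundamentally different, and it has two real gaps.

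First, even in the rational case, the step ``one sees that the integer $n$ in a factor $A^{*n}*L\mathbb{F}_r$ is forced to be uniquely determined'' is not a consequence of Theorem~1.1 of \cite{NON_ISOM} together with the compression identity. Theorem~1.1 there concerns the algebras $A^{*n}$ themselves (non-isomorphism and trivial fundamental group), not $A^{*n}*L\mathbb{F}_r$. For generic $r>1$ the factor $A^{*n}*L\mathbb{F}_r$ is not an amplification of any $A^{*m}$, so no sequence of compressions via Lemma~\ref{lem:A_nk} or~\eqref{eq:A_*n^t} can strip off the $L\mathbb{F}_r$ piece and land you back in the setting of Theorem~1.1. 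The statement you need — that $A^{*N}*L\mathbb{F}_\rho \cong A^{*M}*L\mathbb{F}_W$ with $N,M\ge 2$ forces $N=M$ — is precisely what the sans-rank delivers, and without that invariant (or an equivalent one) I do not see how to extract it from the rescaling calculus alone.

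Second, you explicitly leave the irrational case unresolved, describing it as ``the technical heart of the argument.'' With the sans-rank in hand there is no rational/irrational dichotomy at all: the invariant $s\mapsto s\cdot\rankns(A)$ is injective on $(0,\infty)$ outright, and the argument is finished in a single step. The moral is that your plan tries to propagate an $\mathbb{N}$-indexed rigidity through the continuous family by brute rescaling, whereas the paper pulls the rigidity through by an additive numerical invariant that already scales correctly. If you want to salvage your approach, you should locate and invoke the sans-rank machinery (Remark~2.11, Proposition~2.12, Theorem~3.8 of \cite{NON_ISOM}), at which point most of your compression bookkeeping becomes unnecessary.
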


\begin{proof}
	We use the sans-rank of \cite{NON_ISOM} to distinguish.
	From Remark 2.11, Proposition 2.12, Theorem 3.8, and proof of Theorem 1.1 of \cite{NON_ISOM}, 
	\[
		\rankns(\mathcal{F}_{s,r}(A)) = s \cdot \rankns(A) = s \cdot \tau(p) \neq s' \cdot \tau(p) = \rankns(\mathcal{F}_{s',r'}(A)) ,
	\] 
	where $p \in A$ is the maximal, necessarily non-zero projection such that $Ap$ is purely non-separable. Therefore, the non-isomorphism holds regardless of $r$ and $r'$.
\end{proof}

Now we work on proving the addition formula (\ref{eq:F_addition}).

\begin{lemma} \label{lem:F_s+n,r}
	Let $n \in \mathbb{N}$ and $s \in (0, \infty)$ satisfy
	\begin{enumerate}[(i)]
		\item $n \geq 3$,
		\item $\max \{ t, 1-t \} < \frac{n}{n+1}$,
		\item $r = \frac{n-1}{t^2} - \frac{n}{t} + 1$,
	\end{enumerate}
	where $t = \frac{n}{s+n}$. Then we have
	\[
		A^{*n} * \mathcal{F}_{s,r} \cong \mathcal{F}_{s+n,r}.
	\]
\end{lemma}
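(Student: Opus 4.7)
The plan is to first observe, via Remark \ref{rem:def_IFGF}, that $\mathcal{F}_{n,0} \cong A^{*n}$ and therefore Theorem \ref{thm:F_rescale} with rescaling parameter $t = n/(s+n)$ yields $(A^{*n})^{t} \cong \mathcal{F}_{n/t,\, (n-1)/t^{2} - n/t + 1} = \mathcal{F}_{s+n, r}$, where the second equality uses $t = n/(s+n)$ together with hypothesis (iii). Thus the lemma reduces to the compression identity $A^{*n} * \mathcal{F}_{s,r} \cong (A^{*n})^{t}$, i.e.\ to realising the free product on the left as a specific corner of $A^{*n}$.

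To prove this identity, I would compare both sides after a further compression at scale $s/n_1$, where $n_1 \in \mathbb{N}$ is chosen as in Definition \ref{def:IFGF} so that $\mathcal{F}_{s,r} \cong (A^{*n_1} * L\mathbb{F}_{r_1})^{n_1/s}$ with $r_1 > 1$. Picking a projection $q \in \mathcal{F}_{s,r}$ of trace $s/n_1$, we have $q \mathcal{F}_{s,r} q \cong A^{*n_1} * L\mathbb{F}_{r_1}$. I would compute $q(A^{*n} * \mathcal{F}_{s,r}) q$ by adapting the peel-off argument in the proof of Lemma \ref{lem:A_nk}: in each copy of $A$ inside the $A^{*n}$ factor, use self-symmetry (Definition \ref{def:self_symmetric}) to split $A$ as a weighted direct sum of two self-similar pieces, one of trace $s/n_1$, and then iteratively apply Claim 4.3 of \cite{NON_ISOM} to extract these pieces as separate free factors inside the compression. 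Collecting the residual finite-dimensional algebras with the $L\mathbb{F}_{r_1}$ factor via the free-dimension calculus of \cite{FDIM}, and comparing with $((A^{*n})^{t})^{s/n_1}$ computed directly from Theorem \ref{thm:F_rescale}, the two compressions should match in the form required by Lemma \ref{lem:isom_compre_cond}, which then yields the desired isomorphism.

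Hypotheses (i) and (ii) enter as admissibility conditions: (ii) keeps the split trace $s/n_1$ (and $1 - s/n_1$) and the intermediate trace parameters in the ranges where the self-symmetric decomposition and free-product compressions are valid, and (i) provides enough excess free dimension for the finite-dimensional residues to assemble into a well-defined interpolated free-group factor $L\mathbb{F}_{r'}$ with $r' > 1$. The hard part will be executing the peel-off bookkeeping at the non-integer scale $s/n_1$ rather than at the integer scale $1/k$ treated in Lemma \ref{lem:A_nk}: one must track traces and free dimensions carefully through $n$ iterations of Claim 4.3, verify that the residual free dimensions are positive and that the resulting $L\mathbb{F}_{r'}$ is a proper interpolated free group factor, and finally invoke Lemma \ref{lem:isom_compre_cond} to translate the matched compression back to the original isomorphism.
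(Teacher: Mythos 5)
Your first paragraph is a correct reduction and matches the paper's closing line: by the rescaling formula applied to $\mathcal F_{n,0}\cong A^{*n}$, one indeed has $(A^{*n})^t\cong\mathcal F_{s+n,r}$, so the lemma is equivalent to the compression identity $A^{*n}*\mathcal F_{s,r}\cong(A^{*n})^t$.

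However, the method you propose for that identity has a genuine gap. You want to compress $A^{*n}*\mathcal F_{s,r}$ by a projection of trace $s/n_1$, split each $A_j$ in $A^{*n}$ at trace $s/n_1$, and peel off $n$ copies of $A$. But after $n$ rounds of peel-off at scale $s/n_1$, the residual corner is of the form $\bigl((\mathbb C_{s/n_1}\oplus A_{1-s/n_1})^{*n}*\mathcal F_{s,r}\bigr)^{s/n_1}$, which still contains $n$ copies of $A$ in the complementary pieces $A_{1-s/n_1}$, plus the whole of $\mathcal F_{s,r}$. You say you will ``collect the residual finite-dimensional algebras with the $L\mathbb F_{r_1}$ factor via the free-dimension calculus,'' but the residues are not finite-dimensional --- they involve copies of $A$, and $\mathcal F_{s,r}$ as well --- so the free-dimension calculus of \cite{FDIM} simply does not apply to them. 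There is no mechanism in your plan that eliminates these leftover $A$'s; nor can you assemble the compressed pieces back into a single $\mathcal F_{m,u}$ by an addition formula, since that formula (Theorem~\ref{thm:addition_formula}) is proved \emph{after}, and depends on, this very lemma. Moreover $(A^{*n})^{s/n_1}$ is $\mathcal F_{nn_1/s,\,\cdot}$ with $nn_1/s$ generally irrational, so it cannot be rewritten as $A^{*m}*L\mathbb F$ to feed into Lemma~\ref{lem:isom_compre_cond}.

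The paper's proof uses a different and essential idea: compress $A^{*n}$ itself --- with no $\mathcal F_{s,r}$ around --- at scale $t=n/(s+n)$, obtaining $(A^{*n})^t\cong A^{*n}*\bigl((\mathbb C_t\oplus A_{1-t})^{*n}\bigr)^t$, and then do a \emph{second} peel-off of the residual at the \emph{complementary} scale $1-t$, obtaining $\bigl((\mathbb C_t\oplus A_{1-t})^{*n}\bigr)^{1-t}\cong A^{*n}*\bigl(L\mathbb F_{2nt(1-t)}\bigr)^{1-t}$. It is precisely because the two scales $t$ and $1-t$ are complementary that both corners of each copy of $A$ are cleared out, leaving only a free group factor to bookkeep. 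Hypotheses (i) and (ii) are used to guarantee that $(\mathbb C_t\oplus\mathbb C_{1-t})^{*n}$ is an interpolated free group factor (Theorems 1.1 and 4.6 of \cite{FDIM}) so that the peel-off projections $e_1,\dots,e_n$ are mutually equivalent; they have nothing to do with $s/n_1$, so your reading of where (i) and (ii) enter is also off. Finally, the peel-offs here take place at a single fixed trace value $t$ (resp.\ $1-t$), not at $n_1$-dependent scales, so no auxiliary choice of $n_1$ is required or helpful.
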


\begin{proof}
	Denote the $n$ copies of $A$ in $A^{*n}$ by $A_1, \ldots, A_n$ respectively. For all $1 \leq j \leq n$, by self-symmetry of $A_j$, we take a projection $e_j \in A_j$ of trace $t$ with the trace-preserving isomorphisms as in Definition \ref{def:self_symmetric}. We know $e_1 \sim \cdots \sim e_n$ in 
	\[
		(\overset{e_1}{\underset{t}{\mathbb{C}}} \oplus \overset{1-e_1}{\underset{1-t}{\mathbb{C}}}) * \cdots * (\overset{e_n}{\underset{t}{\mathbb{C}}} \oplus \overset{1-e_n}{\underset{1-t}{\mathbb{C}}}),
	\]
	since, using conditions (i) and (ii), the above is known to be an interpolated free group factor by Theorem 1.1 and 4.6 in \cite{FDIM}. From Fact 4.2 of \cite{NON_ISOM}, 
	\begin{align*}
		(A_1 * \cdots & * A_n)^t \\
		& \cong e_1((A_1e_1 \oplus A_1(1-e_1)) * A_2 * \cdots * A_n)e_1 \\
		& \cong A_1e_1 * e_1((\mathbb{C}e_1 \oplus A_1(1-e_1)) * A_2 * \cdots * A_n)e_1 \\
		& \cong A_1e_1 * e_2((\overset{e_1}{\underset{t}{\mathbb{C}}} \oplus \overset{1-e_1}{\underset{1-t}{A_1}}) * (A_2e_2 \oplus A_2(1-e_2)) * \cdots * A_n)e_2 \\
		& \cong \cdots \\
		& \cong A_1e_1 * \cdots * A_ne_n * e_n((\overset{e_1}{\underset{t}{\mathbb{C}}} \oplus \overset{1-e_1}{\underset{1-t}{A_1}}) * \cdots * (\overset{e_n}{\underset{t}{\mathbb{C}}} \oplus \overset{1-e_n}{\underset{1-t}{A_n}}))e_n \\
		& \cong A^{*n} * ((\underset{t}{\mathbb{C}} \oplus \underset{1-t}{A})^{*n})^t.
	\end{align*}
	In the same way, using Theorem 1.1 and 4.6 of \cite{FDIM},
	\begin{align*}
		((\underset{t}{\mathbb{C}} \oplus \underset{1-t}{A})^{*n})^{1-t} & \cong  A^{*n} * ((\underset{t}{\mathbb{C}} \oplus \underset{1-t}{\mathbb{C}})^{*n})^{1-t} \\
		& \cong A^{*n} * (L\mathbb{F}_{2nt(1-t)})^{1-t} .
	\end{align*}
	Employing Definition \ref{def:IFGF} and formula (\ref{eq:F_rescale}), we obtain:
	\begin{align*}
		 \mathcal{F}_{\frac{n}{t}, \frac{n-1}{t^2} - \frac{n}{t} + 1} \cong (\mathcal F_{n,0})^t & \cong (A^{*n})^t \\
		 & \cong A^{*n} * (A^{*n} * (L\mathbb F_{2nt(1-t)})^{1-t})^{\frac{t}{1-t}} \\
		 & \cong A^{*n} * (\mathcal{F}_{n, 1 + \frac{2nt(1-t)-1}{(1-t)^2}})^{\frac{t}{1-t}} \\
		 & \cong A^{*n} * \mathcal{F}_{\frac{n}{t}-n, \frac{n-1}{t^2} - \frac{n}{t} + 1}.
	\end{align*}
	which is exactly
	\[
		A^{*n} * \mathcal{F}_{s,r} \cong \mathcal{F}_{s+n,r}.
	\]
	
\end{proof}

\begin{lemma} \label{lem:F_s,r+u}
	Let $s \in (0, \infty)$, $r \in (1-s, \infty]$, and $u \in (1, \infty]$. Then 
	\[
		\mathcal{F}_{s,r} * L\mathbb{F}_u \cong \mathcal{F}_{s,r+u}.
	\]
\end{lemma}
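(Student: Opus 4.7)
The plan is to realize both sides of the asserted isomorphism as rescalings via Definition~\ref{def:IFGF} with a common, sufficiently large $n \in \mathbb{N}$, and then reduce to a compression-of-free-product identity. Specifically, choose $n$ so that Definition~\ref{def:IFGF} yields
\[
\mathcal{F}_{s,r} \cong (A^{*n} * L\mathbb{F}_\alpha)^{n/s}, \qquad \mathcal{F}_{s,r+u} \cong (A^{*n} * L\mathbb{F}_\beta)^{n/s},
\]
where $\alpha := \frac{s+r-1}{s^2}n^2 - n + 1 > 1$, $\beta := \alpha + \frac{un^2}{s^2} > 1$, and additionally $v := \frac{un^2}{s^2} > 1$. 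Set $M := A^{*n} * L\mathbb{F}_\alpha$. Associativity of the free product together with $L\mathbb{F}_\alpha * L\mathbb{F}_v \cong L\mathbb{F}_{\alpha + v} = L\mathbb{F}_\beta$ then gives $\mathcal{F}_{s,r+u} \cong (M * L\mathbb{F}_v)^{n/s}$. Since $M^{n/s} \cong \mathcal{F}_{s,r}$ and $v/(n/s)^2 = u$, the lemma reduces to the compression identity
\[
(M * L\mathbb{F}_v)^{n/s} \cong M^{n/s} * L\mathbb{F}_u. \quad (*)
\]

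To obtain $(*)$, I would appeal to the general rescaling-across-free-product identity $(N * L\mathbb{F}_w)^t \cong N^t * L\mathbb{F}_{w/t^2}$ valid for any II$_1$ factor $N$ and admissible parameters $t > 0$, $w > 1$ with $w/t^2 > 1$; this is part of the standard theory of interpolated free group factors developed in \cite{IFGF, Rad94, FDIM} and rests on a semicircular/free-probability computation. Applied with $N = M$, $w = v$, $t = n/s$, it yields $(*)$ directly. At the rational compressions $t = 1/k$ for $k \in \mathbb{N}$, the identity can also be checked transparently and self-containedly from Lemma~\ref{lem:A_nk}: both $(M * L\mathbb{F}_v)^{1/k}$ and $M^{1/k} * L\mathbb{F}_{vk^2}$ evaluate to $A^{*(nk)} * L\mathbb{F}_{(n + \alpha + v - 1)k^2 - nk + 1}$, matching as required and providing a sanity check consistent with free-dimension bookkeeping.

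The main obstacle is handling the generally irrational rescaling $t = n/s$ in $(*)$: direct computation via Lemma~\ref{lem:A_nk} only gives the identity at rational compressions of the form $1/k$, and one cannot in general bootstrap to arbitrary $t$ using only Lemma~\ref{lem:isom_compre_cond}-style well-definedness when $n/s$ is irrational. Consequently, the cleanest proof invokes the standard free-probability construction cited above to validate $(*)$ at $t = n/s$. Once $(*)$ is secured, the lemma follows immediately from the reduction in the first paragraph.
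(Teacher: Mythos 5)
Your reduction is essentially the same as the paper's: both boil down to compressing/amplifying by $s/n$ (equivalently $n/s$) and applying a compression-of-free-products formula to peel off the $L\mathbb{F}$ factor. The one genuine issue is the attribution of your step $(*)$: the identity $(N * L\mathbb{F}_w)^t \cong N^t * L\mathbb{F}_{w/t^2}$ for an \emph{arbitrary} II$_1$ factor $N$ is not found in \cite{IFGF}, \cite{Rad94}, or \cite{FDIM} (those treat rescalings of $L\mathbb{F}_r$ itself, or free products with hyperfinite/finite-dimensional pieces, not compressions of a free product with a general II$_1$ factor). The correct source is Theorem~1.5 of \cite{DR00}, which gives $(M_1 * M_2)^t \cong M_1^t * M_2^t * L\mathbb{F}_{t^{-2}-1}$ for $0<t<1$; combining this with $(L\mathbb{F}_w)^t \cong L\mathbb{F}_{1+(w-1)/t^2}$ and $L\mathbb{F}_a*L\mathbb{F}_b\cong L\mathbb{F}_{a+b}$ yields exactly your $(*)$, and for $t>1$ one inverts. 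This is precisely what the paper's proof does. You are right that $(*)$ at irrational $t$ is the crux and cannot be bootstrapped from Lemma~\ref{lem:A_nk} alone, but the remedy is a concrete theorem (DR00, Thm.~1.5), not the general folklore of interpolated free group factors.
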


\begin{proof}
	We choose $n \in \mathbb{N}$ large enough so that $\frac{n^2}{s^2} - 1 > 1$ and $\mathcal{F}_{s,r} \cong (\mathcal{F}_{n,\frac{r+s-1}{s^2}n^2 - n + 1})^{n/s}$ as in Definition \ref{def:IFGF}. It then follows from Theorem 1.5 in \cite{DR00} that
	\begin{align*}
		& (\mathcal{F}_{s,r} * L\mathbb{F}_u)^{s/n} \cong (\mathcal{F}_{s,r})^{s/n} * (L\mathbb{F}_u)^{s/n} * L\mathbb{F}_{\frac{n^2}{s^2} - 1} \\
		& \cong A^{*n} * L\mathbb{F}_{\frac{r+s-1}{s^2}n^2 - n + 1} * L\mathbb{F}_{1+(u-1)\frac{n^2}{s^2}} * L\mathbb{F}_{\frac{n^2}{s^2} - 1} \\
		& \cong A^{*n} * L\mathbb{F}_{\frac{r+s-1}{s^2}n^2 - n + 1 + u\frac{n^2}{s^2}}.
	\end{align*}
	Hence, by the rescaling formula (\ref{eq:F_rescale}),
	\[
		\mathcal{F}_{s,r} * L\mathbb{F}_u \cong (\mathcal{F}_{n, \frac{r+s-1}{s^2}n^2 - n + 1 + u\frac{n^2}{s^2}})^{n/s} \cong \mathcal{F}_{s, r+u}.
	\]
\end{proof}

\begin{thm} \label{thm:addition_formula}
	Let $s, v \in (0, \infty)$, $r \in (1-s, \infty]$, $u \in (1-v, \infty]$, then
	\[
		\mathcal{F}_{s,r} * \mathcal{F}_{v,u} \cong \mathcal{F}_{s+v, r+u}.
	\]
\end{thm}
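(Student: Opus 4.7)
The plan is to reduce to Lemma \ref{lem:F_s+n,r} by compressing both sides of the desired isomorphism by $t = v/n$ for a suitable large $n \in \mathbb{N}$. The point is that compression turns $\mathcal{F}_{v,u}$ into $\mathcal{F}_{n,\tilde u} \cong A^{*n} * L\mathbb{F}_{\tilde u}$, a factor in the form that Lemma \ref{lem:F_s+n,r} can absorb. Since the rescaling map $M \mapsto M^{v/n}$ is injective on isomorphism classes of II$_1$-factors, it suffices to exhibit an isomorphism after compression.

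Concretely, I would apply the free product compression formula from \cite{DR00} (the same one used in Lemma \ref{lem:F_s,r+u}), namely $(M_1 * M_2)^t \cong M_1^t * M_2^t * L\mathbb{F}_{1/t^2 - 1}$ for II$_1$-factors and $0 < t < 1$, together with Theorem \ref{thm:F_rescale}, to rewrite
\[
  (\mathcal{F}_{s,r} * \mathcal{F}_{v,u})^{v/n} \cong \mathcal{F}_{sn/v,\tilde r} * \mathcal{F}_{n,\tilde u} * L\mathbb{F}_{n^2/v^2 - 1},
\]
where $\tilde r = (s+r-1)n^2/v^2 - sn/v + 1$ and $\tilde u = (v+u-1)n^2/v^2 - n + 1$. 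Using $\mathcal{F}_{n,\tilde u} \cong A^{*n} * L\mathbb{F}_{\tilde u}$ together with commutativity of free products and Lemma \ref{lem:F_s,r+u}, I would collect all $L\mathbb{F}$-summands into the $\mathcal{F}_{sn/v,\tilde r}$ factor, obtaining $\mathcal{F}_{sn/v,\rho} * A^{*n}$ with $\rho = (s+v+r+u-1)n^2/v^2 - (s+v)n/v + 1$ (straightforward bookkeeping). By Theorem \ref{thm:F_rescale} the right-hand side rescales to $(\mathcal{F}_{s+v,r+u})^{v/n} \cong \mathcal{F}_{(s+v)n/v,\rho}$ with the same $\rho$, so everything reduces to showing
\[
  \mathcal{F}_{sn/v,\rho} * A^{*n} \cong \mathcal{F}_{sn/v + n,\rho}.
\]

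The main obstacle is that Lemma \ref{lem:F_s+n,r} pins the $r$-parameter on the left to a specific value $r_0 = (n-1)(s+v)^2/v^2 - n(s+v)/v + 1$ (corresponding to $t_0 = v/(s+v)$ in its notation), which is almost never equal to $\rho$. My plan is to split off the discrepancy: by Lemma \ref{lem:F_s,r+u}, $\mathcal{F}_{sn/v,\rho} \cong \mathcal{F}_{sn/v,r_0} * L\mathbb{F}_{\rho - r_0}$ provided $\rho > r_0$; then Lemma \ref{lem:F_s+n,r} yields $\mathcal{F}_{sn/v,r_0} * A^{*n} \cong \mathcal{F}_{sn/v + n,r_0}$, and reabsorbing $L\mathbb{F}_{\rho - r_0}$ via Lemma \ref{lem:F_s,r+u} gives $\mathcal{F}_{sn/v + n,\rho}$. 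The inequality $\rho > r_0$ holds for sufficiently large $n$, since $\rho$ grows quadratically in $n$ with positive leading coefficient $(s+v+r+u-1)/v^2$ (the sign being positive because $r > 1-s$ and $u > 1-v$ force $s+v+r+u-1 > 1$), whereas $r_0$ grows only linearly in $n$. Choosing $n$ also large enough to satisfy $n \geq 3$, $sn/v > 1$, and $sn/v < n^2$ then ensures hypotheses (i) and (ii) of Lemma \ref{lem:F_s+n,r}, and the proof is complete.
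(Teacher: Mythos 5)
Your proposal is correct and follows essentially the same route as the paper's proof: compress by a small ratio so that one factor becomes $\mathcal{F}_{n,\tilde u} \cong A^{*n}*L\mathbb{F}_{\tilde u}$, apply the Dykema--R\u adulescu compression formula to combine all the $L\mathbb{F}$ pieces, split off the excess free group factor via Lemma \ref{lem:F_s,r+u} so that Lemma \ref{lem:F_s+n,r} applies with the correct pinned $r$-parameter, absorb $A^{*n}$, reabsorb the excess, and uncompress. The only cosmetic differences are that you compress by $v/n$ where the paper uses $s/n$ (after a harmless WLOG $v\geq s$), and you phrase condition (ii) of Lemma \ref{lem:F_s+n,r} as $1 < sn/v < n^2$, which is an equivalent reformulation.
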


\begin{proof}
	Without loss of generality we assume $v \geq s$. We realize $\mathcal{F}_{s,r}$ and $\mathcal{F}_{v,u}$ by choosing $n,m \in \mathbb{N}$ large enough so that 
	\begin{enumerate}[(i)]
		\item \[ n^2/s^2 - 1 > 1, \]
		\item \[ n \geq 3, \]
		\item \[ \frac{v}{s+v} < \frac{n}{n+1}, \]
		\item \[ \frac{r+s+u+v-1}{s^2}n^2 - \frac{(s+v)^2}{s^2}n + \frac{(s+v)^2}{s^2} > 1, \]
	\end{enumerate}
	 as well as \[\mathcal{F}_{s,r} \cong (\mathcal{F}_{n,\frac{r+s-1}{s^2}n^2 - n + 1})^{n/s}\] and \[\mathcal{F}_{v,u} \cong (\mathcal{F}_{m,\frac{u+v-1}{v^2}m^2 - m + 1})^{m/v}\] as in Definition \ref{def:IFGF}. We let $t = s/n$. It follows from Theorem 1.5 in \cite{DR00} using condition (i), rescaling formula (\ref{eq:F_rescale}), and Lemma \ref{lem:F_s,r+u} that 
	\begin{align*}
		(\mathcal{F}_{s,r} * \mathcal{F}_{v,u})^t & \cong (\mathcal{F}_{n,\frac{r+s-1}{s^2}n^2 - n + 1})^{nt/s} * (\mathcal{F}_{m,\frac{u+v-1}{v^2}m^2 - m + 1})^{mt/v} * L\mathbb{F}_{t^{-2}-1} \\
		& \cong \mathcal{F}_{n,\frac{r+s-1}{s^2}n^2 - n + 1} * \mathcal{F}_{\frac{v}{s}n, \frac{u+v-1}{s^2}n^2 - \frac{v}{s}n + 1} * L\mathbb{F}_{t^{-2}-1} \\
		& \cong \mathcal{F}_{n,0} * L\mathbb{F}_{\frac{r+s-1}{s^2}n^2 - n + 1} * \mathcal{F}_{\frac{v}{s}n, \frac{u+v-1}{s^2}n^2 - \frac{v}{s}n + 1} * L\mathbb{F}_{t^{-2}-1} \\
		& \cong A^{*n} * \mathcal{F}_{\frac{v}{s}n,\frac{r+s+u+v-1}{s^2}n^2 - \frac{s+v}{s}n + 1}.
	\end{align*}

	By Lemma \ref{lem:F_s+n,r} using conditions (ii) and (iii), $A^{*n}$ can be absorbed like so:
	\begin{align*}
		A^{*n} * \mathcal{F}_{\frac{v}{s}n,(\frac{(s+v)^2}{s^2} - \frac{s+v}{s}) n - \frac{(s+v)^2}{s^2} +1} \cong \mathcal{F}_{\frac{v}{s}n + n,(\frac{(s+v)^2}{s^2} - \frac{s+v}{s}) n - \frac{(s+v)^2}{s^2} +1}.
	\end{align*}
	Now by Lemma \ref{lem:F_s,r+u} using condition (iv), we add back an interpolated free group factor to obtain: 
	\begin{align*}
		(\mathcal{F}_{s,r} & * \mathcal{F}_{v,u})^t \\
		& \cong A^{*n} * \mathcal{F}_{\frac{v}{s}n,\frac{r+s+u+v-1}{s^2}n^2 - \frac{s+v}{s}n + 1} \\
		& \cong A^{*n} * \mathcal{F}_{\frac{v}{s}n,(\frac{(s+v)^2}{s^2} - \frac{s+v}{s}) n - \frac{(s+v)^2}{s^2} +1} * L\mathbb{F}_{\frac{r+s+u+v-1}{s^2}n^2 - \frac{(s+v)^2}{s^2}n + \frac{(s+v)^2}{s^2}} \\
		& \cong \mathcal{F}_{\frac{v}{s}n + n,(\frac{(s+v)^2}{s^2} - \frac{s+v}{s}) n - \frac{(s+v)^2}{s^2} +1} * L\mathbb{F}_{\frac{r+s+u+v-1}{s^2}n^2 - \frac{(s+v)^2}{s^2}n + \frac{(s+v)^2}{s^2}} \\
		& \cong \mathcal{F}_{\frac{v}{s}n + n, \frac{r+s+u+v-1}{s^2}n^2 - \frac{s+v}{s}n + 1} \\
		& \cong (\mathcal{F}_{s+v, r+u})^t,
	\end{align*}
	where the last isomorphism is just the rescaling formula (\ref{eq:F_rescale}). The theorem is then proved by an amplification by $1/t$ on both sides.
	
\end{proof}

\section{More results and applications}


For a non-separable self-symmetric tracial von Neumann algebra $A$, heuristically, we can think of the parameter $r$ in $\mathcal{F}_{s,r}$ as the free dimension of the separable component, and the parameter $s$ as the free dimension of the non-separable component. The next proposition shows different von Neumann algebras with the same free dimension would give isomorphic free products with our family $\mathcal{F}_{s,r}$. It is therefore an extension of Lemma \ref{lem:F_s,r+u} above.

\begin{prop} \label{prop:F_s,r+fdim}
	Fix some $\mathcal{F}_{s,r}$ and $u \in (0, \infty]$. Let $B_u$ be any finite-dimensional (with dimension $\geq 2$) or hyperfinite von Neumann algebra, or an interpolated free group factor, or direct sums of those, with free dimension $u$. Then
	\[
		\mathcal{F}_{s,r} * B_u \cong \mathcal{F}_{s,r+u}.
	\]
\end{prop}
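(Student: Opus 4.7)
The plan is to mimic the proof of Lemma~\ref{lem:F_s,r+u}, replacing the $L\mathbb{F}_u$ factor by $B_u$ and invoking the free dimension machinery of~\cite{FDIM} at the point where we need to absorb $B_u$ into a surrounding interpolated free group factor. The guiding idea is that, once we compress by a suitable amount, $B_u$ interacts with its neighbors in the resulting free product only through its free dimension.

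First I would choose $n \in \mathbb{N}$ large enough so that both $\frac{n^2}{s^2} - 1 > 1$ and $r' := \frac{r+s-1}{s^2}n^2 - n + 1 > 1$, so that by Definition~\ref{def:IFGF} we have $\mathcal{F}_{s,r} \cong (A^{*n} * L\mathbb{F}_{r'})^{n/s}$. Next I would compress $\mathcal{F}_{s,r} * B_u$ by $s/n$ and apply Theorem 1.5 of~\cite{DR00} to obtain
\[
	(\mathcal{F}_{s,r} * B_u)^{s/n} \cong A^{*n} * L\mathbb{F}_{r'} * B_u^{s/n} * L\mathbb{F}_{n^2/s^2 - 1},
\]
where $B_u^{s/n}$ remains in the allowed class and has free dimension $1 + (u-1)(n/s)^2$ by the rescaling formula for free dimension. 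The absorption theorem of~\cite{FDIM} — that a free product of an interpolated free group factor with members of the allowed class is again an interpolated free group factor whose free dimension is the sum — then collapses the three trailing factors into $L\mathbb{F}_{r' + u(n/s)^2}$. Substituting $r'$ simplifies the parameter to $\frac{r+s+u-1}{s^2}n^2 - n + 1$, so $(\mathcal{F}_{s,r} * B_u)^{s/n} \cong \mathcal{F}_{n,\frac{r+s+u-1}{s^2}n^2 - n + 1}$, and amplifying by $n/s$ yields $\mathcal{F}_{s, r+u}$ via Definition~\ref{def:IFGF}.

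The main obstacle will be making sense of $B_u^{s/n}$ when $B_u$ is not a factor, e.g., finite-dimensional or a direct sum containing finite-dimensional summands that lack a projection of trace $s/n$. One way around this is to invoke the full form of the compression formula in~\cite{DR00}, picking the projection of trace $s/n$ from inside $\mathcal{F}_{s,r}$ (a $\text{II}_1$ factor, hence supporting projections of every trace) and checking that the resulting corner of $B_u$ that enters the compressed free product still belongs to the allowed class with the expected free dimension. A cleaner alternative, when $r+s > 2$, is to peel off a free group factor from $\mathcal{F}_{s,r}$ using Lemma~\ref{lem:F_s,r+u}: pick $v \in (1, r+s-1)$ so that $\mathcal{F}_{s,r} \cong \mathcal{F}_{s,r-v} * L\mathbb{F}_v$, and then
\[
	\mathcal{F}_{s,r} * B_u \cong \mathcal{F}_{s,r-v} * (L\mathbb{F}_v * B_u) \cong \mathcal{F}_{s,r-v} * L\mathbb{F}_{v+u} \cong \mathcal{F}_{s, r+u}
\]
by~\cite{FDIM} on the inner free product and Lemma~\ref{lem:F_s,r+u} at the end. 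The remaining case $r+s \leq 2$ can be reduced to this one by first rescaling via Theorem~\ref{thm:F_rescale} so that the corresponding sum of parameters exceeds $2$, running the argument in the compressed world, and then re-amplifying.
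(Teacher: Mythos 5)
Your ``cleaner alternative'' is a genuinely different route from the paper's, and it works cleanly when $r+s>2$: pick $v\in(1,r+s-1)$, write $\mathcal F_{s,r}\cong\mathcal F_{s,r-v}*L\mathbb F_v$ by Lemma~\ref{lem:F_s,r+u}, absorb $B_u$ into $L\mathbb F_v$ to get $L\mathbb F_{v+u}$ via the factoriality/free-dimension results of~\cite{FDIM}, and apply Lemma~\ref{lem:F_s,r+u} again. This neatly sidesteps having to compress $B_u$ at all. The paper takes a different tack: it compresses $\mathcal F_{s,r}*B_u$ by $1/n$ by first rewriting $\mathcal F_{s,r}$ as $M_n(\mathbb C)\otimes(\mathcal F_{s,r})^{1/n}$, then applies Theorem~1.2 of~\cite{FDIM} to get $(\mathcal F_{s,r})^{1/n}*(B_u*M_n(\mathbb C))^{1/n}$; here $B_u*M_n(\mathbb C)$ is the interpolated free group factor $L\mathbb F_{u+1-n^{-2}}$ once $n$ is large enough, so the non-factor $B_u$ never needs to be compressed on its own, and the argument works uniformly for all $s,r$.

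The gap in your proposal is the reduction of the case $r+s\le 2$. ``Rescaling and running the argument in the compressed world'' does not go through: computing $(\mathcal F_{s,r})^t*B_u$ and then amplifying by $1/t$ does \emph{not} recover $\mathcal F_{s,r}*B_u$, because amplification does not pass through free products. What you would actually need is $(\mathcal F_{s,r}*B_u)^t$, but identifying that algebra is precisely the problem you are trying to solve, and the only tool you invoke --- Theorem~1.5 of~\cite{DR00} --- requires all free factors to be II$_1$ factors, which $B_u$ need not be. Your first approach has the same root difficulty, which you acknowledge. The fix is exactly the paper's maneuver: instead of trying to compress $B_u$, tensor $\mathcal F_{s,r}$ with $M_n(\mathbb C)$ and use Theorem~1.2 of~\cite{FDIM}, so that what gets compressed is the factor $B_u*M_n(\mathbb C)\cong L\mathbb F_{u+1-n^{-2}}$; this handles all $s,r$ (including $r+s\le 2$) in one stroke.
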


\begin{proof}
	We choose $n \in \mathbb{N}$ large enough so that $B_u * M_n(\mathbb{C})$ is a factor. By Theorem 4.6 of \cite{FDIM}, it suffices for the minimal projections in $B_u$ to have trace less than $1 - n^{-2}$, and then $B_u * M_n(\mathbb{C}) \cong L\mathbb{F}_{u + 1 - n^{-2}}$. We have by Theorem 1.2 of \cite{FDIM}
	\begin{align*}
		(\mathcal{F}_{s,r} * B_u)^{1/n} & \cong ((M_n(\mathbb{C}) \otimes (\mathcal{F}_{s,r})^{1/n}) * B_u)^{1/n} \\
		& \cong (\mathcal{F}_{s,r})^{1/n} * (B_u * M_n(\mathbb{C}))^{1/n}.
	\end{align*}
	Using the rescaling formula (\ref{eq:F_rescale}) and Lemma \ref{lem:F_s,r+u}:
	\begin{align*}
		(\mathcal{F}_{s,r} * B_u)^{1/n} & \cong \mathcal{F}_{ns, n^2(s+r-1)-ns+1} * (L\mathbb{F}_{u+1-n^{-2}})^{1/n} \\
		& \cong \mathcal{F}_{ns, n^2(s+r-1)-ns+1} * L\mathbb{F}_{1 + n^2(u-n^{-2})} \\
		& \cong \mathcal{F}_{ns, n^2(u+s+r-1)-ns+1} \\
		& \cong (\mathcal{F}_{s, r+u})^{1/n}.
	\end{align*}
	Now our proposition follows by tensoring with $M_n(\mathbb{C})$.
\end{proof}

The next proposition collects some more free product results involving the family $\mathcal{F}_{s,r}$.

\begin{prop} \label{prop:misc_results}
	For a self-symmetric tracial von Neumann algebra $A$, $n,k \in \mathbb{N}$, $r \in (1, \infty]$ and $t \in (0,1)$, we have
	\begin{enumerate}[(i)]
		\item\label{it:misc-i} \[ A * L\mathbb{Z} \cong (\mathcal{F}_{k, k^2-k+1})^k \cong A * \mathcal{R}, \]
		\item\label{it:misc-ii} \[ A^{*n} * (\underset{t}{A} \oplus \underset{1-t}{\mathbb{C}}) \cong \mathcal{F}_{n+t, t-t^2}, \]
		\item\label{it:misc-iii} \[ (A \otimes M_k(\mathbb{C})) * L\mathbb{F}_r \cong \mathcal{F}_{\frac{1}{k}, r-\frac{1}{k}+1}, \]
\item\label{it:misc-new} 
\[
(\smd At \oplus \smd\Cpx{1-t} )*L(\mathbb F_r)\cong \Fc_{t,r+t-t^2},
\]
		\item\label{it:misc-iv} \[ (\underset{t}{A} \oplus \underset{1-t}{L\mathbb{Z}})^{*n} \cong \mathcal{F}_{nt, n(1-t)}. \]
	\end{enumerate}
\end{prop}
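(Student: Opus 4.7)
The plan for proving (\ref{it:misc-i})--(\ref{it:misc-iv}) is the same: take a well-chosen compression of each side and verify the resulting rescalings agree, using the rescaling formula (\ref{eq:F_rescale}), the addition formula (Theorem \ref{thm:addition_formula}), Lemma \ref{lem:F_s,r+u}, Proposition \ref{prop:F_s,r+fdim}, the corner formula from Fact 4.2 of \cite{NON_ISOM} for compressing a free product by a projection sitting in a direct summand, and the tensor-compression identity $((M_n \otimes X) * Y)^{1/n} \cong X * (Y * M_n)^{1/n}$ from Theorem 1.2 of \cite{FDIM} (already used in the proof of Proposition \ref{prop:F_s,r+fdim}). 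The algebra mirrors the bookkeeping in Lemmas \ref{lem:A_nk} and \ref{lem:F_s+n,r}.

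For (\ref{it:misc-i}), we have $A * L\mathbb{Z} = A * L\mathbb{F}_1 = \mathcal{F}_{1,1}$ by Remark \ref{rem:def_IFGF}, and $(\mathcal{F}_{k,k^2-k+1})^k \cong \mathcal{F}_{1,1}$ follows directly from (\ref{eq:F_rescale}). For the $A * \mathcal{R}$ piece, take a $1/k$-compression: decompose $A$ via self-symmetry into $k$ trace-$1/k$ copies, iterate the corner formula as in Lemma \ref{lem:A_nk} to extract $A^{*k}$, and note $\mathbb{C}^k * \mathcal{R} \cong L\mathbb{F}_{2-1/k}$ by Theorems 1.1 and 4.6 of \cite{FDIM}, whose $1/k$-compression is $L\mathbb{F}_{k^2-k+1}$; hence $(A * \mathcal{R})^{1/k} \cong \mathcal{F}_{k,k^2-k+1}$. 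For (\ref{it:misc-new}), compress by the trace-$t$ projection $e = 1_{A_t}$: the corner formula peels off $A$, leaving $((\mathbb{C}_t \oplus \mathbb{C}_{1-t}) * L\mathbb{F}_r)^t = L\mathbb{F}_{(r+2t-t^2-1)/t^2}$ by Theorem 1.1 of \cite{FDIM} and the standard $L\mathbb{F}$-rescaling; this matches $(\mathcal{F}_{t, r+t-t^2})^t = A * L\mathbb{F}_{(r+2t-t^2-1)/t^2}$ via (\ref{eq:F_rescale}). For (\ref{it:misc-iii}), use a rank-one matrix unit in $M_k$: the tensor-compression identity gives $((A \otimes M_k) * L\mathbb{F}_r)^{1/k} \cong A * (L\mathbb{F}_r * M_k)^{1/k} \cong A * L\mathbb{F}_{rk^2}$ (using $\mathrm{fdim}(M_k) = 1 - 1/k^2$), matching $(\mathcal{F}_{1/k,\, r-1/k+1})^{1/k}$ via (\ref{eq:F_rescale}).

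The heart of the argument is (\ref{it:misc-ii}). First I will establish the intermediate absorption identity
\[
A * \mathcal{F}_{s,r} \cong \mathcal{F}_{s+1,r}, \qquad s > 0,\ r > \max\{0,\, 1-s\},
\]
which is the $n=1$ analogue of Lemma \ref{lem:F_s+n,r}. It follows from a $1/k$-compression for any integer $k \geq 2$: decomposing $A$ and iterating the corner formula extracts $A^{*k}$ from $(A * \mathcal{F}_{s,r})^{1/k}$, leaving $(\mathbb{C}^k * \mathcal{F}_{s,r})^{1/k}$; Proposition \ref{prop:F_s,r+fdim} absorbs $\mathbb{C}^k$ into $\mathcal{F}_{s,\,r+1-1/k}$, (\ref{eq:F_rescale}) rescales this to $\mathcal{F}_{sk,\,sk^2+rk^2-k-sk+1}$, and Theorem \ref{thm:addition_formula} combined with $A^{*k} \cong \mathcal{F}_{k,0}$ produces $\mathcal{F}_{k(s+1),\,sk^2+rk^2-k-sk+1}$; direct arithmetic identifies this with $(\mathcal{F}_{s+1,r})^{1/k}$. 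With the absorption identity in hand, (\ref{it:misc-ii}) follows by compressing by $e = 1_{A_t}$: the corner formula again peels off $A$, $A^{*n} * (\mathbb{C}_t \oplus \mathbb{C}_{1-t}) \cong \mathcal{F}_{n,\,2t(1-t)}$ (by Proposition \ref{prop:F_s,r+fdim} for $n \geq 2$ and by the same $1/k$-compression trick for $n=1$), and (\ref{eq:F_rescale}) followed by the absorption identity produces $\mathcal{F}_{(n+t)/t,\,R}$, matching $(\mathcal{F}_{n+t,\,t-t^2})^t$ by direct arithmetic. For (\ref{it:misc-iv}), iterate the corner formula across the $n$ factors of $B = A_t \oplus L\mathbb{Z}_{1-t}$ (as in the proof of Lemma \ref{lem:F_s+n,r}) to reduce $(B^{*n})^t$ to $A^{*n} * ((\mathbb{C}_t \oplus L\mathbb{Z}_{1-t})^{*n})^t$; applying (\ref{it:misc-new}) to the self-symmetric algebra $L\mathbb{Z}$ (self-symmetric by Theorem \ref{thm:abvNa}) and using the $t \leftrightarrow 1-t$ symmetry gives $\mathrm{fdim}(\mathbb{C}_t \oplus L\mathbb{Z}_{1-t}) = 1 - t^2$, so its $n$-fold free product is $L\mathbb{F}_{n(1-t^2)}$ by standard additivity, whose $t$-compression is $L\mathbb{F}_{(n-1)(1-t^2)/t^2}$; this matches $(\mathcal{F}_{nt,\,n(1-t)})^t = A^{*n} * L\mathbb{F}_{(n-1)(1-t^2)/t^2}$.

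The main obstacle is the parameter bookkeeping, particularly in (\ref{it:misc-ii}): the various rescaling-formula computations must line up on the nose through several composed operations, and the absorption identity $A * \mathcal{F}_{s,r} \cong \mathcal{F}_{s+1,r}$ needs its own compression argument since Lemma \ref{lem:F_s+n,r} requires $n \geq 3$. Once that absorption identity is established, the remaining parts reduce to routine parameter arithmetic and repeated use of the formulas already proved in the paper.
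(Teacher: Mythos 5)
Your plan for parts (\ref{it:misc-i}), (\ref{it:misc-iii}), and (\ref{it:misc-new}) essentially matches the paper's. Two small points on (\ref{it:misc-i}): Remark~\ref{rem:def_IFGF} only identifies $\mathcal{F}_{s,r} \cong A^{*s}*L\mathbb{F}_r$ for $r\in(1,\infty]$, so you cannot cite it to get $A*L\mathbb{F}_1 \cong \mathcal{F}_{1,1}$ directly; however, the $1/k$-compression you carry out for $A*\mathcal{R}$ works verbatim for $A*L\mathbb{Z}$ (both have free dimension $1$), which is exactly what the paper does. For (\ref{it:misc-ii}) you take a genuinely different route: you first prove an absorption identity $A*\mathcal{F}_{s,r}\cong\mathcal{F}_{s+1,r}$ by a $1/k$-compression and then apply it, whereas the paper simply invokes the argument of Lemma~\ref{lem:F_s+n,r}. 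Your route is sound, though you should say ``$k$ large enough'' rather than ``any $k\ge 2$'': the application of Theorem~\ref{thm:addition_formula} to $\mathcal{F}_{k,0}*\mathcal{F}_{sk,\cdot}$ requires $sk^2+rk^2-k>0$, i.e.\ $k>1/(s+r)$, which may fail for $k=2$ when $s+r$ is small. The intermediate absorption identity is a nice reusable fact that the paper does not state explicitly.

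The genuine gap is in (\ref{it:misc-iv}). Your argument compresses $(\smd{A}{t}\oplus\smd{L\mathbb{Z}}{1-t})^{*n}$ by the trace-$t$ projection $e$ and reduces the computation to $((\smd{\mathbb{C}}{t}\oplus\smd{L\mathbb{Z}}{1-t})^{*n})^t$, which you then identify as a compression of $L\mathbb{F}_{n(1-t^2)}$ via free-dimension additivity. But $(\smd{\mathbb{C}}{t}\oplus\smd{L\mathbb{Z}}{1-t})^{*n}$ need not be a factor: for instance, already for $n=2$ and $t>1/2$ the free product has a one-dimensional direct summand supported under $e_1\wedge e_2$ (see Theorem~1.1 of~\cite{FDIM}), so the free-dimension additivity formula giving $L\mathbb{F}_{n(1-t^2)}$ is simply false and the ``$t$-compression'' is ill-defined as a rescaling. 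This is precisely why the paper splits the $\smd{\mathbb{C}}{t}$ summand further into $k$ summands $\smd{\mathbb{C}}{t/k}$ and compresses by $t/k$ for $k$ large enough that $(\smd{\mathbb{C}}{t/k}\oplus\cdots\oplus\smd{\mathbb{C}}{t/k}\oplus\smd{L\mathbb{Z}}{1-t})^{*n}$ is a factor, arriving at $\mathcal{F}_{nk,(n-1)k^2/t^2-nk+1}$ before rescaling back. You need this extra subdivision to make (\ref{it:misc-iv}) correct for all $t\in(0,1)$.
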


\begin{proof}
    We first comment that~\eqref{it:misc-i} is analogous to Theorem 4.1 of \cite{MATMOD}, and~\eqref{it:misc-ii} is analogous to Remark 1.5 of \cite{FDIM}. 
    
     For~\eqref{it:misc-i}, it suffices to compute the $1/k$ compression of $A * L\mathbb Z$ for $k \in \mathbb N$.  As in the beginning of the proof of Lemma \ref{lem:A_nk}, we choose projections $p_1, \ldots, p_k $ in $A$ to obtain
     \begin{align*}
     	(A * L \mathbb Z)^{1/k} & \cong ((Ap_1 \oplus \cdots \oplus Ap_k) * L\mathbb Z )^{1/k} \\
     	& \cong Ap_1 * \cdots * Ap_k * ((\smdp{\Cpx}{1/k}{p_1} \oplus \cdots \oplus \smdp{\Cpx}{1/k}{p_k} ) * L\mathbb Z )^{1/k} \\
     	& \cong A^{*k} * (L\mathbb F_{\frac{k(k-1)}{k^2}+1})^{1/k} \\
     	& \cong A^{*k} * L\mathbb F_{k^2-k+1} \\
     	& \cong \mathcal F_{k, k^2-k+1}.
     \end{align*}
     The computation with $\mathcal R$ in place of $L \mathbb Z$ is exactly the same because both have free dimension $1$.
     
     For~\eqref{it:misc-ii}, it follows from the same argument as in the proof of Lemma \ref{lem:F_s+n,r}. For~\eqref{it:misc-iii}, it is a direct application of Theorem 1.2 of \cite{FDIM}.

For~\eqref{it:misc-new}, let
\[
\Mcal=(\smdp Atp\oplus\smd\Cpx{1-t})*L(\mathbb F_r).
\]
By Theorem~1.2 of~\cite{FDIM}, 
\[
\Mcal^t\cong p\Mcal p
\cong A*\left((\smd \Cpx t\oplus\smd\Cpx{1-t})*L(\mathbb F_r)\right)^t
\cong A* L(\mathbb F_{r+2t(1-t)})^t
\cong \Fc_{1,\frac{r-1}{t^2}+\frac2t-1}.
\]
Thus, 
\[
\Mcal\cong\Fc_{1,\frac{r-1}{t^2}+\frac2t-1}^{1/t}\cong\Fc_{t,r+t-t^2}.
\]
     
     For~\eqref{it:misc-iv}, we choose $k \in \mathbb{N}$ large enough so that
     \[
     	(\underset{t/k}{\mathbb{C}} \oplus \cdots \oplus \underset{t/k}{\mathbb{C}} \oplus \underset{1-t}{L\mathbb{Z}})^{*n}
     \]
     is a factor. Then the same argument as in Lemma \ref{lem:F_s+n,r} shows
     \begin{align*}
     	& ((\underset{t}{A} \oplus \underset{1-t}{L\mathbb{Z}})^{*n})^{t/k} \cong A^{*(nk)} * ((\underset{t/k}{\mathbb{C}} \oplus \cdots \oplus \underset{t/k}{\mathbb{C}} \oplus \underset{1-t}{L\mathbb{Z}})^{*n})^{t/k} \\
     	& \cong \mathcal{F}_{nk, (n-1)\frac{k^2}{t^2}-nk+1}.
     \end{align*}
     Rescaling by $k/t$ gives~\eqref{it:misc-iv}. 
\end{proof}

\begin{rem}
	Note that for all $t \in (0,1)$, \[A_t := \underset{t}{A} \oplus \underset{1-t}{L\mathbb{Z}}\] is still a non-separable self-symmetric abelian tracial von Neumann algebra if $A$ is so, and $A_t \not \cong A$ since $\rankns(A_t) = t \cdot \rankns(A)$. For all $s \in (0,\infty)$ and $r \in (1-s, \infty]$, we have
	\[
		\mathcal{F}_{s,r}(A_t) \cong \mathcal{F}_{st, s+r-st}(A)
	\]
	by~\eqref{it:misc-iv} above. 

\end{rem}

We now consider countably infinite free products of $\mathcal F_{s_i,r_i}(A)$ for a self-symmetric tracial von Neumann algebra $A$.
(Note:  the result below is generalized in Theorem~\ref{thm:finitessum}.)

\begin{thm} \label{thm:A_infty_free_prod}
	Fix a self-symmetric tracial von Neumann algebra $A$. Assume for each $i \in \mathbb N$, $s_i \in (0, \infty)$, $r_i \in (1-s_i, \infty]$, and 
	\[
		\sum_{i \in \mathbb N} s_i = \infty.
	\]
	Then 
	\[
		\underset{i \in \mathbb N}{\bigfreeprod} \mathcal F_{s_i,r_i} \cong A^{*\infty}.
	\]
\end{thm}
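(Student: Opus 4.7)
The plan is to use Theorem~\ref{thm:addition_formula} to regroup the infinite free product into a form recognizable as $A^{*\infty}$ free with an interpolated free group factor, and then to prove an absorption identity $A^{*\infty}*L\mathbb F_R\cong A^{*\infty}$.

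For the regrouping step: since $\sum_i s_i=\infty$, I would first pair adjacent terms $\mathcal F_{s_{2j-1},r_{2j-1}}*\mathcal F_{s_{2j},r_{2j}}\cong\mathcal F_{s_{2j-1}+s_{2j},r_{2j-1}+r_{2j}}$ via Theorem~\ref{thm:addition_formula} to ensure each combined factor satisfies $s+r>2$; this enables me then to apply Theorem~\ref{thm:addition_formula} in the splitting direction, adjusting partial sums of the $s_i$'s to land on any prescribed integer. After this reorganization, the infinite free product becomes $\bigfreeprod_{k\in\mathbb N} \mathcal F_{n_k,R_k}$ with each $n_k$ an integer at least $2$ and $R_k\in(1-n_k,\infty]$. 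By Definition~\ref{def:IFGF} together with Lemma~\ref{lem:F_s,r+u}, each block factor is $A^{*n_k}*L\mathbb F_{R_k}$ (with $L\mathbb F_{R_k}$ interpreted appropriately via Lemma~\ref{lem:F_s,r+u} when $R_k\leq 1$); rearranging the countable free product then yields $A^{*\infty}*L\mathbb F_R$ for some $R\in(-\infty,\infty]$ depending on the choices made.

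The main obstacle is the final absorption step $A^{*\infty}*L\mathbb F_R\cong A^{*\infty}$. My plan is to use that $A$ is self-symmetric, hence has diffuse center by the existence of central projections of every trace $t\in(0,1)$, so $L\mathbb Z\subset Z(A)\subset A$ with trace-preserving conditional expectation; taking the free copies of this $L\mathbb Z$ inside $A^{*\infty}=\freeprod_n A_n$ produces $L\mathbb F_\infty\subset A^{*\infty}$ with a compatible expectation. Combined with the classical identity $L\mathbb F_\infty * L\mathbb F_R\cong L\mathbb F_\infty$ and the rearrangement $A^{*\infty}\cong A^{*\infty}*A^{*\infty}$ (by the bijection $\mathbb N\leftrightarrow\mathbb N\times\mathbb N$), one argues that the extra $L\mathbb F_R$ added to $A^{*\infty}$ is absorbed by the $L\mathbb F_\infty$ already present. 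The technical delicacy here is to formalize this absorption given that $A$ is allowed to be non-separable and that $L\mathbb F_\infty$ is not a literal free factor of $A^{*\infty}$; an alternative route, which may turn out to be cleaner, is to choose the splitting in the regrouping step so that every $R_k$ can be reduced (by redistribution through the addition formula) to $0$, reducing directly to $\bigfreeprod A^{*n_k}\cong A^{*\infty}$ and thus bypassing the absorption issue entirely.
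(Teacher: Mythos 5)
Your overall plan — regroup via the addition formula, reduce to blocks of the form $A^{*n_k}*L\mathbb F$, and absorb the residual interpolated free group factor — is in the same spirit as the paper's proof, and you correctly identify the absorption $A^{*\infty}*L\mathbb F_R\cong A^{*\infty}$ as the real issue. However, both of your proposed routes to the absorption break down, and this is a genuine gap. Route (a) conflates having $L\mathbb F_\infty$ as a subalgebra with conditional expectation and having it as a free product factor; only the latter lets you absorb a freely adjoined $L\mathbb F_R$, and the existence of a (non-free, non-complemented) copy of $L\mathbb F_\infty$ inside $A^{*\infty}$ does not produce a decomposition $A^{*\infty}\cong N*L\mathbb F_\infty$. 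Route (b) is ruled out by a conservation law: the addition formula preserves $\sum r_i$, so if $\sum_i r_i\ne 0$ (for instance if some $r_i=\infty$, or if all $r_i=1$), there is no way to redistribute so that every $R_k=0$; you cannot bypass absorption. What actually makes the absorption work is Theorem~1.5 of \cite{DR00}: compressing a countably infinite free product of II$_1$ factors by $t\in(0,1)$ produces a genuine free factor $L\mathbb F_\infty$, so $M^t$ (and hence $M$) absorbs $L\mathbb F_\infty$. Indeed the paper opens its proof with exactly this, reducing at once to the case $r_i=\infty$ via
\[
\underset{i \in \mathbb N}{\bigfreeprod} \mathcal F_{s_i,r_i} \cong \left ( \underset{i \in \mathbb N}{\bigfreeprod} \mathcal F_{s_i,r_i} \right ) * L\mathbb F_\infty \cong \underset{i \in \mathbb N}{\bigfreeprod} \mathcal F_{s_i,\infty},
\]
using DR00 and Proposition~\ref{prop:F_s,r+fdim}.

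There is also a secondary gap in your regrouping step as stated. When you split $\mathcal F_{S,R}$ with $S+R>2$ along the addition formula, the pieces you produce need not again satisfy $s+r>2$, and the resulting block parameters $R_k$ need not lie in $(1,\infty]$, so you cannot literally invoke $\mathcal F_{n_k,R_k}\cong A^{*n_k}*L\mathbb F_{R_k}$ (the parenthetical ``interpreted appropriately via Lemma~\ref{lem:F_s,r+u} when $R_k\le1$'' is doing real unexplained work, since $L\mathbb F_{R_k}$ is simply undefined there). Once you have reduced to $r_i=\infty$, this problem disappears (every split piece automatically has $r=\infty$), and your integer-regrouping idea would go through. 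The paper instead handles noninteger $s_i$ by a telescoping device, writing $\mathcal F_{s_i,\infty}$ as a free product with boundary terms $(\smd At\oplus\smd\Cpx{1-t})$ that merge between consecutive factors, which avoids the bookkeeping of splitting at integer partial sums; either method should work after the $r_i=\infty$ reduction.
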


\begin{proof}
	From Theorem 1.5 of \cite{DR00} and Proposition \ref{prop:F_s,r+fdim}, we have
	\[
		\underset{i \in \mathbb N}{\bigfreeprod} \mathcal F_{s_i,r_i} \cong \left ( \underset{i \in \mathbb N}{\bigfreeprod} \mathcal F_{s_i,r_i} \right ) * L\mathbb F_\infty \cong \underset{i \in \mathbb N}{\bigfreeprod} (\mathcal F_{s_i,r_i} * L\mathbb F_\infty ) \cong \underset{i \in \mathbb N}{\bigfreeprod} \mathcal F_{s_i,\infty},
	\]
	so we may without loss of generality assume $r_i = \infty$ for all $i \in \mathbb N$. From the free product addition formula (\ref{eq:F_addition}), we have for $l \in \mathbb N$,
	\[
		\underset{i=k}{\overset{k+l}{\bigfreeprod}} \mathcal F_{s_i, \infty} \cong \mathcal F_{s, \infty},
	\]
	where $s = s_k + \cdots + s_{k+l} $, so we may without loss of generality assume $s_i \geq 3$ for all $i \in \mathbb N$ by regrouping the infinite free product, if necessary.
Moreover, from Proposition \ref{prop:misc_results}, we have
	\begin{equation} \label{eq:F_s_t_inf}
		\mathcal F_{s,\infty} * ( \underset{t}{A} \oplus \underset{1-t}{\mathbb C} ) \cong \mathcal F_{s+t, \infty}
	\end{equation}
	for all $s > 1$ and $0 \leq t < 1$, where we abuse notation to mean
	\[
		\underset{0}{A} \oplus \underset{1}{\mathbb C} \cong \mathbb C.
	\]
	
	Now let
	\[
		n_1=\lfloor s_i \rfloor,\quad t_1=s_1-n_1
	\]
	and recursively choose, for all $i \ge 2$,
	\[
		n_i=\lfloor s_i-(1-t_{i-1}) \rfloor,\quad t_i=s_i-(1-t_{i-1})-n_i.
	\]
	In particular, we have $n_1\ge1$, $0\le t_1<1$, and
	\[
		\mathcal F_{s_1, \infty} \cong A^{*n_1} * L\mathbb F_\infty * ( \underset{t_1}{A} \oplus \underset{1-t_1}{\mathbb C} ),
	\]
	which follows from Remark \ref{rem:def_IFGF} and (\ref{eq:F_s_t_inf}).
	For all $i\ge2$, we have $n_i\ge1$, $t_i\in[0,1)$ and $s_i=(1-t_{i-1})+n_i+t_i$, so
	\[
		\mathcal F_{s_i, \infty} \cong ( \underset{1-t_{i-1}}{A} \oplus \underset{t_{i-1}}{\mathbb C} ) * A^{*n_i} * L\mathbb F_\infty * ( \underset{t_i}{A} \oplus \underset{1-t_i}{\mathbb C} ) .
	\]
	Substituting these into the infinite free product and using associativity and commutativity of the free product, we get
	\begin{align*}
		\overset{\infty}{\underset{i=1}{\bigfreeprod}} & \mathcal F_{s_i,\infty} \\
		& \cong A^{*n_1} * L\mathbb F_\infty * ( \underset{t_1}{A} \oplus \underset{1-t_1}{\mathbb C} ) * \left( \overset{\infty}{\underset{i=2}{\bigfreeprod}} ( \underset{1-t_{i-1}}{A} \oplus \underset{t_{i-1}}{\mathbb C} ) * A^{*n_i} * L\mathbb F_\infty * ( \underset{t_i}{A} \oplus \underset{1-t_i}{\mathbb C} ) \right) \\
		& \cong A^{*n_1} * L\mathbb F_\infty * \left( \overset{\infty}{\underset{i=1}{\bigfreeprod}} 
		\left( 
			( \underset{t_i}{A} \oplus \underset{1-t_i}{\mathbb C} ) * ( \underset{1-t_i}{A} \oplus \underset{t_i}{\mathbb C} ) * A^{*n_{i+1}} * L\mathbb F_\infty
		\right)\right) \\
		& \cong A^{*n_1} * L\mathbb F_\infty * 
		\left( 
			\overset{\infty}{\underset{i=1}{\bigfreeprod}} \left( A^{*(1+n_{i+1})}*L\mathbb F_\infty \right)
		\right) \\
		& \cong A^{*\infty} * L\mathbb F_\infty \\
		& \cong A^{*\infty},
	\end{align*}
	where we have used ~\eqref{eq:F_s_t_inf} twice to obtain the third isomorphism above. 
\end{proof}

\begin{thm}
	The Murray-von Neumann fundamental group of $A^{*\infty}$ is $\mathbb{R}_+^*$ for all self-symmetric tracial von Neumann algebras $A$.
\end{thm}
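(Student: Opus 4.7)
The plan is to show $(A^{*\infty})^t \cong A^{*\infty}$ for every $t \in (0,\infty)$. Since the fundamental group of any II$_1$ factor is a subgroup of $\mathbb{R}_+^*$, it suffices to treat $t \in (0,1)$, and the case $t \ge 1$ then follows by inversion.

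Fix $t \in (0,1)$ and any positive integer $n$. By Theorem~\ref{thm:A_infty_free_prod} (decomposing $A^{*\infty} \cong \bigfreeprod_{i=1}^\infty \mathcal{F}_{n,\infty}$ and splitting off a single factor), we have $A^{*\infty} \cong \mathcal{F}_{n,\infty} * A^{*\infty}$. Compressing both sides by $t$ via Theorem~1.5 of~\cite{DR00}, using the rescaling formula~\eqref{eq:F_rescale} to rewrite $(\mathcal{F}_{n,\infty})^t \cong \mathcal{F}_{n/t,\infty}$, and finally absorbing the accompanying $L\mathbb{F}_{1/t^2 - 1}$ into the $r=\infty$ slot by Lemma~\ref{lem:F_s,r+u}, one obtains
\[
(A^{*\infty})^t \;\cong\; \mathcal{F}_{n/t,\infty} * (A^{*\infty})^t.
\]

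Iterating this identity $k$ times yields $(A^{*\infty})^t \cong \mathcal{F}_{n/t,\infty}^{*k} * (A^{*\infty})^t$, so $(A^{*\infty})^t$ contains an ascending chain of free subalgebras $\mathcal{F}_{n/t,\infty}^{*k}$. Passing to the inductive limit embeds $\mathcal{F}_{n/t,\infty}^{*\infty}$ as a free subalgebra of $(A^{*\infty})^t$, and by Theorem~\ref{thm:A_infty_free_prod} (with $s_i = n/t$, which sums to $\infty$) this inductive limit is isomorphic to $A^{*\infty}$. Thus $A^{*\infty}$ embeds as a free subalgebra of $(A^{*\infty})^t$.

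The main obstacle is to verify that this inductive-limit subalgebra actually exhausts all of $(A^{*\infty})^t$; equivalently, that the nested ``residual'' copies of $(A^{*\infty})^t$ appearing at each iteration have trivial common intersection. I would address this by observing that at stage $k$ the residual is free from the generated $\mathcal{F}_{n/t,\infty}^{*k}$, so any element surviving in all residuals would be free from the whole $\mathcal{F}_{n/t,\infty}^{*\infty}\cong A^{*\infty}$, producing an extra free-product factor that contradicts the identity $(A^{*\infty})^t \cong \mathcal{F}_{n/t,\infty} * (A^{*\infty})^t$ at every stage. A more concrete alternative, avoiding the limit entirely, is to follow the projection-picking and Fact~4.2 manipulations of \cite{NON_ISOM} (as in the proofs of Lemmas~\ref{lem:A_nk} and \ref{lem:F_s+n,r}) directly on a representation $A^{*\infty} \cong \bigfreeprod_{i=1}^\infty A_i$: using self-symmetry to select trace-$t$ projections in each $A_i$ and iterating countably many times to express $(A^{*\infty})^t$ explicitly as a countable free product of $\mathcal{F}_{s_i,r_i}$'s with $\sum s_i = \infty$, to which Theorem~\ref{thm:A_infty_free_prod} applies directly.
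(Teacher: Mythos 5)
Your opening reduction and the identity $(A^{*\infty})^t \cong \mathcal{F}_{n/t,\infty} * (A^{*\infty})^t$ are both fine, but the iteration/inductive-limit step that follows has a genuine gap, and the fix you sketch does not close it. An isomorphism $M \cong F * M$ does not by itself imply $M \cong F^{*\infty}$: iterating produces a decreasing chain of residual copies of $M$ whose intersection need not be the scalars, and the observation that an element surviving in all residuals would be free from $F^{*\infty}$ yields no contradiction. Indeed it is entirely consistent with $M \cong F*M$ that $M$ have the form $F^{*\infty}*N$ with $N$ nontrivial; for example $M = L\mathbb{F}_\infty * N$ and $F = L\mathbb{F}_2$ satisfy $M \cong F*M$ for any tracial von Neumann algebra $N$, yet $M \not\cong F^{*\infty}$ when $N$ is, say, a property (T) factor. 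So exhaustion genuinely needs an argument, and the swindle does not supply one.

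The paper avoids the issue by never iterating. It writes $A^{*\infty}$ as the countable free product of copies of $A*A$, compresses termwise using Theorem~1.5 of~\cite{DR00}, identifies each $(A*A)^t \cong \mathcal{F}_{2/t,\,1/t^2-2/t+1}$ by the rescaling formula~\eqref{eq:F_rescale}, and then applies Theorem~\ref{thm:A_infty_free_prod} directly (the parameters $s_i = 2/t$ sum to $\infty$). Your ``more concrete alternative'' at the end --- expressing $(A^{*\infty})^t$ as a countable free product of $\mathcal{F}_{s_i,r_i}$'s with $\sum s_i = \infty$ and invoking Theorem~\ref{thm:A_infty_free_prod} --- is exactly the right idea and is essentially what the paper does; the clean way to carry it out is to distribute the compression over the finite pieces $A*A$ via Theorem~1.5 of~\cite{DR00} and the rescaling formula, rather than to select trace-$t$ projections inside each factor of an infinite free product, which would itself require another limiting argument. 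Drop the iterative approach and make the second alternative precise along these lines.
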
 

\begin{proof}
	Let $t \in (0,1)$ be arbitrary. Then by Theorem 1.5 of \cite{DR00}, the rescaling formula (\ref{eq:F_rescale}), and the above Theorem \ref{thm:A_infty_free_prod},
	\begin{align*}
		& (A^{*\infty})^t \cong \underset{n \in \mathbb{N}}{\bigfreeprod} (A*A)^t \cong \underset{n \in \mathbb{N}}{\bigfreeprod} \mathcal{F}_{\frac{2}{t}, \frac{1}{t^2} - \frac{2}{t} + 1} \cong A^{*\infty}.
	\end{align*}
	This shows the fundamental group of $A^{*\infty}$ contains $(0,1)$ so is all of $\mathbb{R}_+^*$. 
\end{proof}

The remainder of this section is dedicated to proving Theorem~\ref{thm:finitessum}, which is a generalization of
Theorem~\ref{thm:A_infty_free_prod}.
We nonetheless keep the proof of Theorem~\ref{thm:A_infty_free_prod} presented above in our exposition,
because it is considerably easier than the proof of the more general result, and because Theorem~\ref{thm:finitessum}
is not needed for the above result about the fundamental group.

The following is well known, but for convenience we provide a proof.
\begin{lemma}\label{lem:freeproj}
Let $(\mathcal{M},\tau)$ be a tracial von Neumann algebra.
Suppose $p$ and $q$ are projections in $\mathcal{M}$ that are free and satisfy $\tau(p)=\tau(q)$.
Then there exists a unitary element, $u$, in the von Neumann algebra generated by $\{p,q,1\}$, such that $u^*qu=p$.
Moreover, the central support of $p$ in $\Mcal$ equals the central support of $q$ in $\Mcal$.
\end{lemma}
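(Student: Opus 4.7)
The plan is to realize $u$ as the partial isometry in the polar decomposition of a concrete intertwiner. Set
\[
v := qp + (1-q)(1-p) \in W^*(p,q,1).
\]
A direct expansion gives
\[
v^*v = pqp + (1-p)(1-q)(1-p), \qquad vv^* = qpq + (1-q)(1-p)(1-q),
\]
together with $vp = qp = qv$, so $v$ intertwines $p$ and $q$; also $v^*v$ commutes with $p$, since its two summands live in $p\mathcal{M}p$ and $(1-p)\mathcal{M}(1-p)$ respectively.

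The decisive step is injectivity of $v^*v$. From its $p$-block structure one reads off
\[
\ker(v^*v) = \operatorname{ran}(p \wedge q^\perp) + \operatorname{ran}(p^\perp \wedge q).
\]
Setting $\alpha := \tau(p) = \tau(q) \in (0,1)$ (the endpoints being trivial), freeness forces both mixed meets to vanish: a standard consequence of the Halmos two-projection decomposition is $p \wedge q^\perp + p^\perp \wedge q = \chi_{\{1\}}(p+q)$, and by the Bercovici--Voiculescu atom rule for free additive convolution applied to $\mu_p \boxplus \mu_q$ with $\mu_p = \mu_q = \alpha\delta_1 + (1-\alpha)\delta_0$, the atom of $p+q$ at $1$ has weight $\max(0,\alpha + (1-\alpha) - 1) = 0$.

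With $v^*v$ injective, the polar decomposition $v = u|v|$ carried out inside $W^*(v) \subseteq W^*(p,q,1)$ yields a partial isometry $u$ whose right support is the support of $v^*v$, namely $1$. In the finite tracial $\mathcal{M}$, faithfulness of $\tau$ forces $uu^* = 1$ as well, so $u$ is a unitary in $W^*(p,q,1)$. Because $|v|$ commutes with $p$, the intertwining identity $vp = qv$ rewrites as $up|v| = qu|v|$; since $|v|$ has $\tau$-dense range, cancellation yields $up = qu$, i.e.\ $u^*qu = p$.

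The ``moreover'' statement is immediate, since conjugation by a unitary of $\mathcal{M}$ preserves central supports, so the central support of $q$ in $\mathcal{M}$ equals that of $u^*qu = p$. The only substantive ingredient is the vanishing of the mixed atoms via freeness; everything else is routine polar-decomposition bookkeeping.
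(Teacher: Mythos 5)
Your proof is correct, and the unitary it produces is in fact the same one the paper constructs, just packaged differently: you form $v=qp+(1-q)(1-p)$ and take its polar isometry $u$, while the paper takes the polar isometries of $qp$ and of $(1-q)(1-p)$ separately and adds them; since the two summands of your $v$ have orthogonal initial supports ($p$ and $1-p$) and orthogonal final supports ($q$ and $1-q$), the two recipes yield the same $u$. The genuine difference is the free-probability input used to show the relevant kernels are trivial. The paper passes through the distribution of $pqp$ and invokes Voiculescu's computation (Example~2.8 of \cite{Voi87}) of the atom at $0$ of the free multiplicative convolution of two Bernoulli laws, concluding that the support projection of $pqp$ equals $p$; the companion fact $vv^*=q$ is then extracted by a trace argument from $\tau(p)=\tau(q)$. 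You instead look at $p+q$, identify $\ker(v^*v)$ with the range of $\chi_{\{1\}}(p+q)=\bigl(p\wedge(1-q)\bigr)+\bigl((1-p)\wedge q\bigr)$ via the Halmos two-subspace decomposition, and apply the Bercovici--Voiculescu atom rule for free additive convolution to see this spectral projection vanishes. Both routes are valid and roughly equally standard; yours kills the two mixed meets in a single atom computation, at the cost of the extra two-subspace observation, while the paper's uses one atom formula plus a trace bookkeeping step.
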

\begin{proof}
Let $\alpha=\tau(p)$.
If $\alpha=0$ or $\alpha=1$, then we may take $u=1$.
Suppose $0<\alpha<1$.
Let $\mu$ be the distribution of $pqp$.
Then $\mu$ is equal to the multiplicative free convolution of the measure $(1-\alpha)\delta_0+\alpha\delta_1$ with itself.
By Example~2.8 in~\cite{Voi87}, $\mu$ has an atom at $0$ with mass equal to $1-\alpha$.
Consequently, the support projection of $pqp$ is equal to $p$.
Let $qp=v(pqp)^{1/2}$ be the polar decomposition of $qp$.
Then $v$ is a partial isometry satisfying $v^*v=p$ and $vv^*=q$.
Similarly, from the polar decomposition of $(1-q)(1-p)$, we obtain a partial isometry $w$ satisfying $w^*w=1-p$ and $ww^*=1-q$.
Let $u=v+w$.
Then $u^*qu=p$.

The existence of the unitary $u$ implies the statement about the central supports.
\end{proof}

\begin{lemma}\label{lem:Cpx+B}
Let $B$ be a tracial von Neumann algebra and let $\alpha\in(0,1)$.
Let
\[
\Mcal=(\smdp\Cpx\alpha p\oplus B)*(\smdp\Cpx\alpha q\oplus\Bt),
\]
where $\Bt$ is a copy of $B$.
Consider the von Neumann subalgebras
\begin{align*}
\Nc&=(\smdp\Cpx\alpha p\oplus B)*(\smdp\Cpx\alpha q\oplus\Cpx),\\[1ex]
\Nct&=(\smdp\Cpx\alpha p\oplus\Cpx)*(\smdp\Cpx\alpha q\oplus\Bt)
\end{align*}
of $\Mcal$.
Then there is a trace-preserving automorphism $\beta$ of $\Mcal$ satisfying
\[
\beta(p)=p,\qquad\beta(\Nc)=\Nct\quad\text{ and }\quad\beta(\Nct)=\Nc.
\]
\end{lemma}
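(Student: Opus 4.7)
The plan is to realize $\beta$ as the composition of an inner automorphism of $\Mcal$ with a natural ``swap'' automorphism that exchanges the two free factors.

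First I would apply Lemma~\ref{lem:freeproj} to the free projections $p,q\in\Mcal$ of common trace $\alpha$, obtaining a unitary $u\in\{p,q,1\}''$ with $u^*qu=p$. The key observation is that $u$ lies in $\Nc\cap\Nct$: by definition each of $\Nc$ and $\Nct$ is the free product of two algebras, one containing $p$ and the other containing $q$, so each of $\Nc$ and $\Nct$ contains $\{p,q,1\}''\ni u$.

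Next, I would build a trace-preserving automorphism $\sigma$ of $\Mcal$ that interchanges the two free factors. Since $\smdp\Cpx\alpha p\oplus B$ and $\smdp\Cpx\alpha q\oplus\Bt$ are trace-preservingly isomorphic via $p\leftrightarrow q$ together with the given identification $B\cong\Bt$, the standard fact that trace-preserving isomorphisms of the free factors extend to a trace-preserving isomorphism of the reduced free product, combined with the commutativity $A_1*A_2\cong A_2*A_1$, yields a trace-preserving automorphism $\sigma:\Mcal\to\Mcal$ with $\sigma(p)=q$, $\sigma(q)=p$, $\sigma(B)=\Bt$, and $\sigma(\Bt)=B$. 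Reading off the definitions of $\Nc$ and $\Nct$, one sees immediately that $\sigma(\Nc)=\Nct$ and $\sigma(\Nct)=\Nc$.

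Finally, I would set $\beta:=\mathrm{Ad}(u^*)\circ\sigma$. Then $\beta(p)=u^*\sigma(p)u=u^*qu=p$, as required. Because $u\in\Nc\cap\Nct$, conjugation by $u^*$ preserves each of $\Nc$ and $\Nct$, so
\[
\beta(\Nc)=\mathrm{Ad}(u^*)(\sigma(\Nc))=\mathrm{Ad}(u^*)(\Nct)=\Nct,
\]
and symmetrically $\beta(\Nct)=\Nc$. Both $\sigma$ and $\mathrm{Ad}(u^*)$ preserve the trace, hence so does $\beta$. The only mildly delicate step is the construction of the swap $\sigma$, but this is a routine application of the universal property of the reduced free product; the rest is bookkeeping with the unitary supplied by Lemma~\ref{lem:freeproj}.
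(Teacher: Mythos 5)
Your proof is correct and takes essentially the same approach as the paper: construct the free flip automorphism $\sigma$ interchanging the two free factors, obtain the unitary $u\in W^*(\{p,q,1\})\subseteq\Nc\cap\Nct$ from Lemma~\ref{lem:freeproj} with $u^*qu=p$, and set $\beta=\mathrm{Ad}(u^*)\circ\sigma$. The verification of $\beta(p)=p$, $\beta(\Nc)=\Nct$, and $\beta(\Nct)=\Nc$ is then the same bookkeeping as in the paper.
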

\begin{proof}
Let $\sigma$ be the free flip automorphism of $\Mcal$, namely, that interchanges the two copies $\Cpx\oplus B$ and $\Cpx\oplus\Bt$, mapping each identically onto the other.
Note that $\sigma$ is trace-preserving, $\sigma^2=\id$, $\sigma(p)=q$ and $\sigma(\Nc)=\Nct$.
Let $u\in W^*(\{p,q,1\})$ be the unitary from Lemma~\ref{lem:freeproj} satisfying $u^*qu=p$.
Note that we have $u\in\Nc\cap\Nct$.
Let $\beta(\cdot)=u^*\sigma(\cdot)u$.
Then $\beta$ is a trace-preserving automorphism of $\Mcal$ and
\begin{align*}
\beta(p)&=u^*qu=p, \\
\beta(\Nc)&=u^*\Nct u=\Nct \\
\beta(\Nct)&=u^*\Nc u=\Nc.
\end{align*}
\end{proof}

\begin{lemma}\label{lem:A+B}
Let $A$ and $B$ be tracial von Neumann algebras and let $\alpha\in(0,1)$.
Let
\[
(\Mcal,\tau_\Mcal)=(\smdp A\alpha p\oplus B)*(\smdp\Cpx\alpha q\oplus\Bt),
\]
where $\Bt$ is a copy of $B$.
Consider the von Neumann subalgebras
\begin{align*}
\Nc&=(\smdp\Cpx\alpha p\oplus B)*(\smdp\Cpx\alpha q\oplus\Cpx),\\[1ex]
\Nct&=(\smdp\Cpx\alpha p\oplus\Cpx)*(\smdp\Cpx\alpha q\oplus\Bt).
\end{align*}
Then there is a trace-preserving automorphism $\gamma$ of $\Mcal$ such that
\[
\gamma(p)=p,\qquad\gamma(\Nc)=\Nct,\qquad\gamma(\Nct)=\Nc
\]
and $\gamma$ maps the subalgebra $A\oplus\Cpx$ of $\Mcal$ identically to itself (by which we mean the restriction of $\gamma$ to this subalgebra
is the identity map of the subalgebra to itself).
\end{lemma}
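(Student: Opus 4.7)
The plan is to realize $(\Mcal,\tau_\Mcal)$ as the amalgamated free product $(A\oplus\Cpx(1-p))*_D\Mcal'$ over the common subalgebra $D=\Cpx p+\Cpx(1-p)$, where
\[
\Mcal':=(\smdp\Cpx\alpha p\oplus B)*(\smdp\Cpx\alpha q\oplus\Bt)
\]
is precisely the $\Mcal$ of Lemma~\ref{lem:Cpx+B}, viewed inside $\Mcal$ as the subalgebra generated by $\Cpx p\oplus B\subset A\oplus B$ together with the second free factor $\Cpx q\oplus\Bt$. Granting this decomposition, the lemma follows quickly: Lemma~\ref{lem:Cpx+B} supplies a trace-preserving automorphism $\beta$ of $\Mcal'$ with $\beta(p)=p$, $\beta(\Nc)=\Nct$ and $\beta(\Nct)=\Nc$, and the relation $\beta(p)=p$ forces $\beta|_D=\id_D$. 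The trace-preserving homomorphisms $\id:A\oplus\Cpx\hookrightarrow\Mcal$ and $\beta:\Mcal'\to\Mcal'\hookrightarrow\Mcal$ therefore agree on $D$, so the universal property of the amalgamated free product yields a unique trace-preserving $*$-homomorphism $\gamma:\Mcal\to\Mcal$ extending them; applying the same construction to $\beta^{-1}$ produces the inverse. Thus $\gamma$ is an automorphism, and the conditions $\gamma|_{A\oplus\Cpx}=\id$, $\gamma(\Nc)=\Nct$, $\gamma(\Nct)=\Nc$ hold by construction.

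The main obstacle is establishing the decomposition $\Mcal=(A\oplus\Cpx(1-p))*_D\Mcal'$ itself. Generation is routine: $\Mcal'$ contains $\Cpx p\oplus B$ and the full second free factor, and combined with $A\oplus\Cpx$ (which contributes $A$ at the $p$-corner) it generates both free factors of $\Mcal$. The content lies in showing freeness with amalgamation over $D$: for every alternating word with entries in $(A\oplus\Cpx)\ominus D$ and $\Mcal'\ominus D$ respectively, the conditional expectation $E_D:\Mcal\to D$ of the word must vanish.

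The centered subspace $(A\oplus\Cpx)\ominus D$ collapses to $A^\circ:=\{a\in A:\tau_A(a)=0\}$, supported at the $p$-corner. For $y\in\Mcal'$ I would use the free product structure $\Mcal'=M_2*M_3$ with $M_2=\Cpx p\oplus B$ and $M_3=\Cpx q\oplus\Bt$: since $D\subset M_2$ and the conditional expectation $E_{M_2}$ vanishes on the span of alternating words containing an $M_3^\circ$-factor (here $M_i^\circ$ denotes the trace-zero part of $M_i$), the condition $E_D(y)=0$ is equivalent to a decomposition $y=b(1-p)+y'$ with $b\in B^\circ$ and $y'$ a sum of alternating products in $M_2*M_3$ each containing at least one $M_3^\circ$-factor. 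Because every $x_i\in A^\circ$ is supported at the $p$-corner, each $b_j(1-p)$-summand is annihilated by its neighboring $x$-factors, so in the alternating word one may replace each $y_j$ by $y_j'$.

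Expanding each $y_j'$ as a sum of alternating $M_2^\circ/M_3^\circ$ words and combining consecutive factors lying in the first free factor $F_1:=A\oplus B$ of $\Mcal$ (with $F_2:=\Cpx q\oplus\Bt$) converts the expression into an alternating word in $F_1*F_2=\Mcal$; the key absorption identity is $a\cdot(\lambda p+b(1-p))=\lambda a$ for $a\in A^\circ$, which keeps the combined $F_1$-factor trace-zero in $A^\circ$. Freeness of $F_1$ and $F_2$ in $\Mcal$ then gives $\tau=0$ on every such word, and repeating the same combining argument after left-multiplication by the trace-zero spanning element $p-\alpha\in D$ yields $\tau((p-\alpha)\cdot\text{word})=0$ as well; together these imply $E_D=0$ on the original alternating word. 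I expect the most tedious part to be the case analysis for words beginning or ending with a $y$-entry, but in each case the reduction to an alternating word in $F_1*F_2$ proceeds identically.
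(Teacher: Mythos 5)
Your proof is correct but genuinely departs from the paper's route. The paper invokes Theorem~1.2 of~\cite{FDIM} to deduce that $A$ and $p\Qc p$ are free (over scalars) in the corner $p\Mcal p$, builds $\gammat = \id_A * \beta\restrict_{p\Qc p}$ on that corner, and then spends the bulk of the argument extending $\gammat$ to all of $\Mcal$ by choosing a maximal family of partial isometries $(v_i)$ and patching $\gammat$ and $\beta$ together on the off-diagonal pieces. You instead establish the global amalgamated free product decomposition $\Mcal \cong (A\oplus\Cpx)\ast_D \Mcal'$ over the two-dimensional diagonal $D = \Cpx p + \Cpx(1-p)$, after which $\gamma = \id \ast_D \beta$ drops out of the universal property with no extension step at all; the partial-isometry bookkeeping in the paper is, in effect, a hands-on substitute for exactly this universal property. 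Your decomposition is a global reformulation of the corner-freeness statement (compress your claim by $p$ and you recover the paper's use of FDIM 1.2), so the two approaches are closely related in content but different in packaging: the paper leans on an existing cited theorem plus explicit extension, while you reprove the structural fact from scratch and let the categorical machinery do the extension. One caveat on your moment computation: the reduction to alternating $F_1^\circ/F_2^\circ$-words cleanly gives $\tau(w)=0$, but for $\tau((p-\alpha)w)=0$ in the case that $w$ begins with a letter $y_1 \in \Mcal'\ominus D$, the phrase \emph{repeating the same combining argument} hides a real step --- the product $(p-\alpha)m_0$ of the $D^\circ$-multiplier against a leading $M_2^\circ$-letter of $y_1'$ is generally not centered, so it must first be split into its scalar and $M_2^\circ$-parts before the combining can proceed. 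That split works (both resulting terms lead to alternating $F_1^\circ/F_2^\circ$-words), but it is not literally the same argument and deserves to be written out.
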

\begin{proof}
Consider the von Neumann subalgebra
\[
\Qc=(\smdp\Cpx\alpha p\oplus B)*(\smdp\Cpx\alpha q\oplus\Bt)
\]
of $\Mcal$.
Clearly, $A=A\oplus0$ and $\Qc$ together generate $\Mcal$.
By Theorem~1.2 of~\cite{FDIM}, $A$ and $p\Qc p$ are free in $p\Mcal p$ with respect to the renormalized trace
$\alpha^{-1}\tau_\Mcal\restrict_{p\Mcal p}$.
Let $\beta$ be the trace-preserving automorphism of $\Qc$ obtained from Lemma~\ref{lem:Cpx+B} and satisfying $\beta(p)=p$, $\beta(\Nc)=\Nct$ and $\beta(\Nct)=\Nc$.
Since $A$ and $p\Qc p$ are free, there is a trace-preserving automorphism $\gammat$ of $p\Mcal p$ mapping $A$ identically to itself and whose restriction to $p\Qc p$ agrees with $\beta$.

We claim that $\gammat$ extends to an automorphism of $\Mcal$ whose restriction to $\Qc$ is $\beta$.
Indeed, we may choose a maximal family $(v_i)_{i\in I}$ of partial isometries in $\Qc$ subject to the conditions 
\begin{align*}
\forall i\quad v_i^*v_i\le p\,&\text{ and }\,v_iv_i^*\le 1-p, \\
i,j\in I,\,i\ne j&\implies v_iv_i^*\perp v_jv_j^*.
\end{align*}
Then
\[
\lspan\left((1-p)\Qc(1-p)\cup p\Mcal p\cup\bigcup_{i\in I}(v_ip\Mcal p\cup p\Mcal pv_i^*)\right)
\]
is dense in $\Mcal$.
Indeed, given $x\in \Mcal$, we have
\[
x=pxp+(1-p)x(1-p)+\sum_{i\in I}\big(v_i(v_i^*xp)+(pxv_i)v_i^*\big),
\]
where the sum converges in s.o.t.
We may now define
\[
\gamma(x)=\gammat(pxp)+\beta((1-p)x(1-p))+\sum_{i\in I}\big(\beta(v_i)\gammat(v_i^*xp)+\gammat(pxv_i)\beta(v_i^*)\big).
\]
Using that $\gammat$ and $\beta$ are trace-preserving automorphisms and that $\gammat\restrict_{p\Qc p}=\beta\restrict_{p\Qc p}$,
we can show that $\gamma$ is a trace-preserving automorphism of $\Mcal$.
It satisfies the desired properties by construction.
\end{proof}

\begin{prop}\label{prop:Ai*}
Let $I$ be a finite or countable set and let $\alpha_i>0$ satisfy $\sum_{i\in I}^\infty\alpha_i=1$.
For each $i\in I$, let $B_i$ be a tracial von Neumann algebra.
Let
\[
\Mcal=\left(\bigoplus_{i\in I}\smdp{B_i}{\alpha_i}{p_i}\right)*\left(\smd{\bigfreeprod}{i \in I} (\smdp\Cpx{\alpha_i}{q_i}\oplus\Cpx)\right)
\]
and
\[
\Pc=\left(\bigoplus_{i\in I}\smdp{\Cpx}{\alpha_i}{r_i}\right)*\left(\smd{\bigfreeprod}{i \in I}(\smdp{B_i}{\alpha_i}{s_i}\oplus\Cpx)\right).
\]
Then there is a normal, trace-preserving $*$-isomorphism $\pi:\Mcal\to\Pc$ such that for all $i\in I$, $\pi(p_i)=r_i$.
\end{prop}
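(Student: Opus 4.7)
The plan is to embed both $\Mcal$ and $\Pc$ into a common ambient algebra that contains two copies of each $B_i$, then construct a trace-preserving automorphism of the ambient algebra mapping $\Mcal$ onto $\Pc$ with $p_i\mapsto r_i$. The construction applies Lemma~\ref{lem:A+B} once per index, and the main obstacle is to make sense of this construction simultaneously for all (possibly infinitely many) indices.

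Define the ambient algebra
\[
\widetilde{\Mcal}=\left(\bigoplus_{i\in I}\smdp{B_i}{\alpha_i}{p_i}\right)*\left(\bigfreeprod_{i\in I}\bigl(\smdp{B_i'}{\alpha_i}{s_i}\oplus\smdp{\Cpx}{1-\alpha_i}{\tilde q_i}\bigr)\right),
\]
where each $B_i'$ is an isomorphic copy of $B_i$. Then $\Mcal$ embeds into $\widetilde{\Mcal}$ as the sub-von-Neumann-algebra generated by $\bigoplus_{i\in I}B_i$ together with $\{s_i\}_{i\in I}$ (so that $s_i$ plays the role of $q_i$ in $\Mcal$), and $\Pc$ embeds as the sub-von-Neumann-algebra generated by $\{p_i\}_{i\in I}$ together with $\bigfreeprod_{i\in I}(B_i'\oplus\Cpx\tilde q_i)$; in each case, freeness inherited from the free product structure of $\widetilde{\Mcal}$ ensures the embedding realizes the abstract free product, and the projection $p_i\in\widetilde{\Mcal}$ corresponds to $p_i\in\Mcal$ and to $r_i\in\Pc$ under the respective embeddings.

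For each $j\in I$, write $\widetilde{\Mcal}=\Ncal_j*\bigfreeprod_{i\ne j}(B_i'\oplus\Cpx\tilde q_i)$ with $\Ncal_j=(\bigoplus_{i\in I}B_i)*(B_j'\oplus\Cpx\tilde q_j)$. Applying Lemma~\ref{lem:A+B} to $\Ncal_j$ (with $A=\bigoplus_{i\ne j}B_i$, $B=B_j$, $\Bt=B_j'$, and the lemma's parameter $\alpha$ equal to $1-\alpha_j$) yields a trace-preserving automorphism of $\Ncal_j$ fixing $(\bigoplus_{i\ne j}B_i)\oplus\Cpx p_j$ pointwise and swapping the subalgebras $\Nc$ and $\Nct$ of Lemma~\ref{lem:A+B} (denote these $\Nc_j$ and $\Nct_j$). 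Extending by the identity on $\bigfreeprod_{i\ne j}(B_i'\oplus\Cpx\tilde q_i)$ via free-product functoriality gives $\gamma_j\in\mathrm{Aut}(\widetilde{\Mcal})$. Checking on generators, $\gamma_j$ fixes $B_i$, $p_i$, $B_i'$, $s_i$, and $\tilde q_i$ whenever $i\ne j$, so the $\gamma_j$'s pairwise commute.

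For finite $I$, let $\gamma=\prod_{j\in I}\gamma_j$; verifying on generators shows $\gamma(\Mcal)\subseteq\Pc$ and $\gamma^{-1}(\Pc)\subseteq\Mcal$, hence $\gamma(\Mcal)=\Pc$ with $\gamma(p_i)=p_i$ corresponding to $r_i$ under the embedding. For infinite $I$, the infinite composition requires care and is the main obstacle. The observation is that any $x$ in the $\|\cdot\|_2$-dense $*$-subalgebra of $\widetilde{\Mcal}$ generated by $\{B_i,p_i,B_i',s_i,\tilde q_i\}$ for $i$ in some finite subset of $I$ is fixed by all $\gamma_j$ with $j$ outside that finite subset; so by commutativity, the finite composition defines a trace-preserving $*$-homomorphism $\gamma$ on this dense subalgebra. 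Since each $\gamma_j$ is $\|\cdot\|_\infty$-isometric, so is $\gamma$; it extends uniquely to a trace-preserving normal $*$-endomorphism of $\widetilde{\Mcal}$, and the same construction applied to the $\gamma_j^{-1}$ yields its inverse, so $\gamma\in\mathrm{Aut}(\widetilde{\Mcal})$ and $\pi:=\gamma\restrict_{\Mcal}$ is the required isomorphism.
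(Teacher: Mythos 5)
Your proof is correct and takes essentially the same approach as the paper: your ambient algebra $\widetilde{\Mcal}$ is the paper's intermediate algebra $\Qc$, and your automorphisms $\gamma_j$ are obtained from Lemma~\ref{lem:A+B} in exactly the same way, with the paper handling the infinite case by observing that the compositions $\gamma_n\circ\cdots\circ\gamma_1\circ\pi_0$ stabilize on a dense subalgebra, which is a reformulation of your stabilization argument. The only cosmetic differences are that you make the pairwise commutativity of the $\gamma_j$ explicit and upgrade the resulting map to an automorphism of the ambient algebra before restricting (the paper builds a $*$-homomorphism $\Mcal\to\Qc$ directly), and you omit the trivial $|I|=1$ case which the lemma's hypothesis $\alpha\in(0,1)$ excludes.
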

\begin{proof}
If $|I|=1$, there is nothing to show, so assume $|I|\ge2$.
Let
\[
\Qc=\left(\bigoplus_{i\in I}\smdp{B_i}{\alpha_i}{r_i}\right)*\left(\smd{\bigfreeprod}{i \in I}(\smdp{\Bt_i}{\alpha_i}{s_i}\oplus\Cpx)\right),
\]
where $\Bt_i$ is a copy of $B_i$.
Let $\pi_0:\Mcal\to\Qc$ be the trace-preserving, injective, normal $*$-homomorphism sending $\bigoplus_{i\in I}B_i$ identically to itself
and sending each projection $q_i$ to $s_i$.
Fixing $j\in I$, we may write
\[
\bigoplus_{i\in I}\smdp{B_i}{\alpha_i}{r_i}=\smdp{A_j}{1-\alpha_j}{1-r_j}\oplus\smdp{B_j}{\alpha_j}{r_j},
\]
where
\[
A_j=\bigoplus_{i\in I\setminus\{j\}}\smdp{B_i}{\alpha_i/(1-\alpha_j)}{r_i}.
\]
Consider the subalgebra
\[
\Qc_j=(\smdp{A_j}{1-\alpha_j}{1-r_j}\oplus\smdp{B_j}{\alpha_j}{r_j})*(\smdp{\Bt_j}{\alpha_j}{s_j}\oplus\smd\Cpx{1-\alpha_j})
\]
of $\Qc$.
By Lemma~\ref{lem:A+B}, there is a trace-preserving automorphism $\gammat_j$ of $\Qc_j$ that maps $A_j\oplus\Cpx$ identically to itself and such that,
letting
\begin{align*}
\Nc_j&=(\Cpx\oplus\smdp{B_j}{\alpha_j}{r_j})*(\smdp\Cpx{\alpha_j}{s_j}\oplus\Cpx)\subseteq\Qc_j \\
\Nct_j&=(\Cpx\oplus\smdp\Cpx{\alpha_j}{r_j})*(\smdp{\Bt_j}{\alpha_j}{s_j}\oplus\Cpx)\subseteq\Qc_j,
\end{align*}
we have
$\gammat_j(\Nc_j)=\Nct_j$ and $\gammat_j(\Nct_j)=\Nc_j$.
We extend $\gammat_j$ to an automorphism $\gamma_j$ of $\Qc$ by taking the free product with the identity maps on all the $\Bt_i\oplus\Cpx$ with $i\ne j$.
Note that, if $i\ne j$, then $\gamma_j$ sends each of 
\[
\smdp\Cpx{1-\alpha_i}{1-r_i}\oplus\smdp{B_i}{\alpha_i}{r_i}\quad\text{and}\quad\smdp{\Bt_i}{\alpha_i}{s_i}\oplus\smdp\Cpx{1-\alpha_i}{1-s_i}
\]
identically to itself, so sends each of $\Nc_i$ and $\Nct_i$ identically to itself, while $\gamma_j(r_j)=r_j$ and, of course, $\gamma_j(\Nc_j)=\Nct_j$ and $\gamma_j(\Nct_j)=\Nc_j$.

Let
\[
\Mcal_i=(\smdp{B_i}{\alpha_i}{p_i}\oplus\smdp\Cpx{1-\alpha_i}{1-p_i})*(\smdp\Cpx{\alpha_i}{q_i}\oplus\smdp\Cpx{1-\alpha_i}{1-q_i})\subseteq\Mcal.
\]
Then
\[
\Mcal=W^*(\bigcup_{i\in I}\Mcal_i).
\]
Moreover, $\pi_0(\Mcal_i)=\Nc_i$.
Suppose that $I$ is finite, say, $I=\{1,2,\ldots,n\}$.
Let
\[
\pi=\gamma_n\circ\gamma_{n-1}\circ\cdots\circ\gamma_2\circ\gamma_1\circ\pi_0:\Mcal\to\Qc.
\]
Note that $\pi$ is a trace-preserving, injective, normal $*$-homomorphism.
Furthermore, for each $j\in I$,
\begin{equation}\label{eq:piMj}
\begin{aligned}
\pi(\Mcal_j)&=\gamma_n\circ\gamma_{n-1}\circ\cdots\circ\gamma_2\circ\gamma_1(\Nc_j) \\
&=\gamma_n\circ\gamma_{n-1}\circ\cdots\circ\gamma_j(\Nc_j) \\
&=\gamma_n\circ\gamma_{n-1}\circ\cdots\circ\gamma_{j+1}(\Nct_j)=\Nct_j.
\end{aligned}
\end{equation}
Thus,
\[
\pi(\Mcal)=W^*(\bigcup_{j=1}^n\pi(\Mcal_j))=W^*(\bigcup_{j=1}^n\Nct_j)=\Pc,
\]
and we are done in the case that $I$ is finite.

Now suppose that $I$ is infinite, say, $I=\{1,2\ldots\}$.
For each $n\ge1$, let
\[
\pi_n=\gamma_n\circ\gamma_{n-1}\circ\cdots\circ\gamma_2\circ\gamma_1\circ\pi_0:\Mcal\to\Qc.
\]
Of course, each $\pi_n$ is an injective, trace-preserving, normal $*$-homomorphism.
Modifying the argument seen at equation~\eqref{eq:piMj}, we get
\[
\pi(\Mcal_j)=\begin{cases}\Nc_j,&\text{if }n<j\\[1ex] \Nct_j,&\text{if }n\ge j\end{cases}
\]
and, furthermore, since the restriction of $\gamma_n$ to $\Nct_j$ is the identity map when $n>j$,
we see that for all $x\in\bigcup_{j=1}^\infty\Mcal_j$, the sequence $(\pi_n(x))_{n=1}^\infty$ is eventually constant.
Since $\bigcup_{j=1}^\infty\Mcal_j$ is dense in $\Mcal$, we may define an injective, trace perservion, normal $*$-homomorphism $\pi:\Mcal\to\Qc$
by
\[
\pi(x)=\lim_{n\to\infty}\pi_n(x),
\]
for all $x\in\Mcal$, where the limit is in s.o.t.
Now we have
\[
\pi(\Mcal)=W^*(\bigcup_{j=1}^\infty\pi(\Mcal_j))=W^*(\bigcup_{j=1}^\infty\Nct_j)=\Pc.
\]
\end{proof}

We are finally ready to prove the generalization of Theorem~\ref{thm:A_infty_free_prod}.

\begin{thm}\label{thm:finitessum}
Fix a self-symmetric tracial von Neumann algebra $A$. Assume for each $i \in \mathbb N$, $s_i \in (0, \infty)$, $r_i \in (1-s_i, \infty]$, and let
$s=\sum_{i \in \mathbb N} s_i$.
Then 
\[
\underset{i \in \mathbb N}{\bigfreeprod} \mathcal F_{s_i,r_i} \cong \Fc_{s,\infty}.
\]
\end{thm}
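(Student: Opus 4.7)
The plan is in three stages: reduce to the case $r_i = \infty$ for all $i$, establish the base case $s = 1$ via Proposition~\ref{prop:Ai*}, and extend to general $s \in (0, \infty)$ by rescaling.

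For Step~1, each $\mathcal{F}_{s_i, r_i}$ absorbs $L\mathbb{F}_\infty$ by Lemma~\ref{lem:F_s,r+u}, so $M := \bigfreeprod_i \mathcal{F}_{s_i, r_i}$ does as well.  Writing $L\mathbb{F}_\infty \cong \bigfreeprod_{i \in \mathbb{N}} L\mathbb{F}_\infty$ and rearranging by associativity and commutativity of the free product,
\[
M \cong M * L\mathbb{F}_\infty \cong \bigfreeprod_i(\mathcal{F}_{s_i, r_i} * L\mathbb{F}_\infty) \cong \bigfreeprod_i \mathcal{F}_{s_i, \infty}.
\]
So we may assume $r_i = \infty$ for all $i$.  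If $s = \infty$, conclude by Theorem~\ref{thm:A_infty_free_prod} (interpreting $\mathcal{F}_{\infty, \infty} := A^{*\infty}$); otherwise assume $s \in (0, \infty)$.

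For Step~2, suppose $\sum_i s_i = 1$.  Apply Proposition~\ref{prop:Ai*} with $\alpha_i = s_i$ and $B_i = A$.  By Lemma~\ref{lem:direct_sum_self_symm}, $\bigoplus_i \smd{A}{s_i} \cong A$, so the proposition yields
\[
A * \bigfreeprod_i(\smd{\mathbb{C}}{s_i} \oplus \smd{\mathbb{C}}{1-s_i}) \cong \Bigl(\bigoplus_i \smd{\mathbb{C}}{s_i}\Bigr) * \bigfreeprod_i(\smd{A}{s_i} \oplus \smd{\mathbb{C}}{1-s_i}).
\]
Take the free product with $L\mathbb{F}_\infty \cong \bigfreeprod_i L\mathbb{F}_\infty$ on both sides and distribute.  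On the left, each $(\smd{\mathbb{C}}{s_i} \oplus \smd{\mathbb{C}}{1-s_i}) * L\mathbb{F}_\infty \cong L\mathbb{F}_\infty$, so the left collapses to $A * L\mathbb{F}_\infty \cong \mathcal{F}_{1, \infty}$.  On the right, each $(\smd{A}{s_i} \oplus \smd{\mathbb{C}}{1-s_i}) * L\mathbb{F}_\infty \cong \mathcal{F}_{s_i, \infty}$ by Proposition~\ref{prop:misc_results}\eqref{it:misc-new}, and $\bigfreeprod_i \mathcal{F}_{s_i, \infty}$ absorbs $L\mathbb{F}_\infty$ (hence the hyperfinite algebra $\bigoplus_i \smd{\mathbb{C}}{s_i}$), so the right collapses to $\bigfreeprod_i \mathcal{F}_{s_i, \infty}$.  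Thus $\mathcal{F}_{1, \infty} \cong \bigfreeprod_i \mathcal{F}_{s_i, \infty}$.

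For Step~3, given $s \in (0, \infty)$, the sequence $(s_i/s)_{i \in \mathbb{N}}$ sums to $1$, so by Step~2, $\bigfreeprod_i \mathcal{F}_{s_i/s, \infty} \cong \mathcal{F}_{1, \infty}$.  Apply $(\cdot)^{1/s}$ to both sides.  By Theorem~1.5 of~\cite{DR00} (the $L\mathbb{F}_r$ correction term, being $L\mathbb{F}_\infty$ for an infinite free product, is absorbed by the factors of the resulting free product) combined with rescaling formula~(\ref{eq:F_rescale}) -- which gives $(\mathcal{F}_{s_i/s, \infty})^{1/s} \cong \mathcal{F}_{s_i, \infty}$ and $(\mathcal{F}_{1, \infty})^{1/s} \cong \mathcal{F}_{s, \infty}$ -- we obtain $\bigfreeprod_i \mathcal{F}_{s_i, \infty} \cong \mathcal{F}_{s, \infty}$, completing the proof.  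The main obstacle is Step~2: Proposition~\ref{prop:Ai*} produces a symmetric-looking isomorphism, but its utility emerges only after free-producting with $L\mathbb{F}_\infty$, which simplifies the two sides in strikingly different ways -- the left collapses to $\mathcal{F}_{1, \infty}$ via Lemma~\ref{lem:direct_sum_self_symm}, while on the right the $A$'s survive as direct summands in each free factor to become the $\mathcal{F}_{s_i, \infty}$'s.
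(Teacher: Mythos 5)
Your proposal follows the paper's strategy with the same key ingredients and in essentially the same order of ideas: reduce to $r_i=\infty$, reduce to $s=1$ via the rescaling formula and Theorem~1.5 of~\cite{DR00}, then use Proposition~\ref{prop:Ai*} together with Lemma~\ref{lem:direct_sum_self_symm} and Proposition~\ref{prop:misc_results}\eqref{it:misc-new} to identify $\bigfreeprod_i\mathcal{F}_{s_i,\infty}$ with $\mathcal{F}_{1,\infty}$. The only structural differences are cosmetic: you establish the $s=1$ case first and then amplify by $1/s$ rather than compressing by $s$ at the outset, and you state Proposition~\ref{prop:Ai*} and then free-product both sides with $L\mathbb{F}_\infty$, whereas the paper first freely inserts $\bigoplus_i\smd{\Cpx}{s_i}$ (absorbed into $L\mathbb{F}_\infty$) and then applies the proposition in the reverse direction. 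These are the same manipulations.

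One imprecision in Step~1: the assertion that ``each $\mathcal{F}_{s_i,r_i}$ absorbs $L\mathbb{F}_\infty$ by Lemma~\ref{lem:F_s,r+u}'' is not what that lemma gives. The lemma yields $\mathcal{F}_{s_i,r_i}*L\mathbb{F}_\infty\cong\mathcal{F}_{s_i,\infty}$, which is not $\mathcal{F}_{s_i,r_i}$ when $r_i<\infty$, so an individual factor need not absorb $L\mathbb{F}_\infty$. The needed fact $M\cong M*L\mathbb{F}_\infty$ for $M=\bigfreeprod_i\mathcal{F}_{s_i,r_i}$ is still true, but because $M$ is an \emph{infinite} free product: compressing by some $t\in(0,1)$ accumulates the correction terms $L\mathbb{F}_{t^{-2}-1}$ of Theorem~1.5 of~\cite{DR00} into an $L\mathbb{F}_\infty$ free factor of $M^t$, and amplifying back together with free-dimension absorption (Proposition~\ref{prop:F_s,r+fdim}) gives the claim. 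This is the justification the paper invokes in Theorem~\ref{thm:A_infty_free_prod}; the conclusion of your Step~1 is correct, but the citation offered for it does not support it.
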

\begin{proof}
If $s=\infty$, then this was proved in Theorem~\ref{thm:A_infty_free_prod}, so let us assume $s<\infty$.
Arguing as at the start of the proof of Theorem~\ref{thm:A_infty_free_prod}, we may without loss of generality assume $r_i=\infty$ for all $i$.
Now, employing the rescaling formula~\eqref{eq:F_rescale}, proved in Theorem~\ref{thm:F_rescale}, and Theorem~1.5 of~\cite{DR00} (directly for compression by $s$ if $0 < s < 1$, or indirectly for compression by $1/s$ if $s > 1$), we have
\[
\left( \smd \bigfreeprod {i \in \Nats} \Fc_{s_i,\infty}\right)^s \cong \smd \bigfreeprod {i \in \Nats} \Fc_{\frac{s_i}s,\infty},
\]
while $
(\Fc_{s,\infty})^s \cong \Fc_{1,\infty}$.
Hence, we may without loss of generality assume $s=1$.
Using Proposition~\ref{prop:misc_results},
\[
\Fc_{s_i,\infty}=(\smd A{s_i}\oplus\Cpx)*L(\mathbb F_\infty).
\]
Employing Proposition~\ref{prop:Ai*}, we get
\begin{equation}\label{eq:fpsum}\begin{aligned} 
\underset{i \in \mathbb N}{\bigfreeprod} \mathcal F_{s_i,r_i}
&\cong \smd \bigfreeprod {i \in \Nats}\left((\smd A{s_i}\oplus\Cpx)*L(\mathbb F_\infty)\right) \\
&\cong\left(\smd \bigfreeprod {i \in \Nats}(\smd A{s_i}\oplus\Cpx)\right)*\left(\bigoplus_{i\in\Nats}\smd\Cpx{s_i}\right)*L(\mathbb F_\infty) \\
&\cong\left(\bigoplus_{i\in\Nats}\smd A{s_i}\right)*\left(\smd \bigfreeprod {i \in \Nats}(\smd \Cpx{s_i}\oplus\Cpx)\right)*L(\mathbb F_\infty) \\
&\cong A*L(\mathbb F_\infty) \cong \Fc_{1,\infty}
\end{aligned}
\end{equation}
where in the second last isomorphism above we used the self-symmetry property of $A$ and Lemma \ref{lem:direct_sum_self_symm}.
\end{proof}

We conclude our paper with the following theorem that uses the full power of Proposition \ref{prop:Ai*}.

\begin{thm}
	Let $I$ be a finite or countably infinite index set. Let $\{A_i\}_{i \in I}$ be self-symmetric tracial von Neumann algebras. For each $i \in I$, let $s_i \in (0, \infty)$, $r_i \in (1-s_i, \infty]$. Assume $s = \sum_{i \in I} s_i < \infty$, and denote $r = \sum_{i \in I} r_i$, 
	\[
		A = \bigoplus_{i \in I} \smd{A_i}{s_i / s}.
	\]
	Then
	\[
		\smd{\bigfreeprod}{i \in I} \Fc_{s_i, r_i}(A_i) \cong \mathcal{F}_{s,r}(A).
	\]

\end{thm}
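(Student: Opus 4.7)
The plan is to combine a rescaling argument with Proposition~\ref{prop:Ai*}, mirroring the proof of Theorem~\ref{thm:finitessum} while accommodating the distinct algebras $A_i$. A useful preliminary observation: if $I$ is countably infinite and $s<\infty$, then $s_i\to 0$, so the constraint $r_i>1-s_i$ forces $r_i>1/2$ for all but finitely many $i$, whence $r=\sum r_i=\infty$ automatically; thus $r<\infty$ can only occur when $|I|$ is finite.

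I would compress both sides by $s$ so that the normalized weights $s_i/s$ sum to $1$ and match the hypotheses of Proposition~\ref{prop:Ai*}. By Theorem~\ref{thm:F_rescale},
\[
(\mathcal{F}_{s_i,r_i}(A_i))^s\cong\mathcal{F}_{s_i/s,\,r_i'}(A_i),\qquad r_i'=(s_i+r_i-1)/s^2 - s_i/s + 1,
\]
and $(\mathcal{F}_{s,r}(A))^s\cong\mathcal{F}_{1,r^*}(A)$ with $r^*=(s+r-1)/s^2$. By Theorem~1.5 of~\cite{DR00},
\[
(\mathrm{LHS})^s\cong\smd{\bigfreeprod}{i\in I}\mathcal{F}_{s_i/s,r_i'}(A_i)*L\mathbb{F}_R,
\]
where $R=\infty$ if $I$ is infinite and a specific finite value otherwise. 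Since $s_i/s<1$ for $|I|\ge 2$ (the case $|I|=1$ is trivial), I would apply Proposition~\ref{prop:misc_results}\eqref{it:misc-new} (using Lemma~\ref{lem:F_s,r+u} to borrow $L\mathbb{F}$-mass between factors when needed to satisfy the hypothesis $r_i''>1$) to rewrite each
\[
\mathcal{F}_{s_i/s,r_i'}(A_i)\cong(\smd{A_i}{s_i/s}\oplus\smd{\Cpx}{1-s_i/s})*L\mathbb{F}_{r_i''},\qquad r_i''=(s_i+r_i-1)/s^2+(1-s_i/s)^2.
\]

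After aggregating, $(\mathrm{LHS})^s\cong\smd{\bigfreeprod}{i\in I}(\smd{A_i}{s_i/s}\oplus\smd{\Cpx}{1-s_i/s})*L\mathbb{F}_{R'}$ for some $R'$. I then insert a free copy of $\bigoplus_i\smd{\Cpx}{s_i/s}$, absorbed into $L\mathbb{F}_{R'}$ via the free-dimension calculus (Proposition~\ref{prop:F_s,r+fdim}), and apply Proposition~\ref{prop:Ai*} with $B_i=A_i$ and $\alpha_i=s_i/s$ to obtain
\[
A*\smd{\bigfreeprod}{i\in I}(\smd{\Cpx}{s_i/s}\oplus\smd{\Cpx}{1-s_i/s})*L\mathbb{F}_{R''},
\]
using that $A=\bigoplus_i\smd{A_i}{s_i/s}$ by definition. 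The middle free product of two-dimensional algebras can be absorbed into the final $L\mathbb{F}$-term via Theorem~4.6 of~\cite{FDIM} and Proposition~\ref{prop:F_s,r+fdim}, giving $A*L\mathbb{F}_{r^*}\cong\mathcal{F}_{1,r^*}(A)$. Amplifying by $1/s$ then yields $\mathcal{F}_{s,r}(A)$.

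The main obstacle is the arithmetic bookkeeping: verifying that the accumulated free-group-factor contributions—the individual $r_i''$, the compression surplus $R$, and the free dimensions of the inserted and absorbed finite-dimensional pieces—sum precisely to $r^*=(s+r-1)/s^2$. A secondary concern is ensuring sufficient $L\mathbb{F}$-mass is available at each intermediate step; when $r$ or individual $r_i$'s are close to the boundary $1-s$ (respectively $1-s_i$), a preliminary amplification by $1/n$ for large $n\in\Nats$ (in the spirit of Lemma~\ref{lem:A_nk}) can generate extra free-group-factor mass, at the cost of more intricate weight-tracking when Proposition~\ref{prop:Ai*} is subsequently applied.
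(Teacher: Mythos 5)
Your proposal diverges from the paper in a significant way for finite~$I$, and in that case it has a real gap that the sketched fixes do not clearly resolve.

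For infinite~$I$ (where $r=\infty$ automatically, as you note), your outline essentially reproduces the paper's argument: compress to normalize the weights, rewrite each factor as $(\smd{A_i}{\alpha_i}\oplus\Cpx)*L\mathbb F_\infty$ via Proposition~\ref{prop:misc_results}\eqref{it:misc-new}, and apply Proposition~\ref{prop:Ai*}. This part is fine. But the paper treats finite~$I$ entirely differently: it reduces to $|I|=2$ by induction and handles that base case by a direct computation of the corner $(\;\cdot\;)^{st/n}$ with $n$ chosen large, never invoking Proposition~\ref{prop:Ai*} there. You instead propose to run the Proposition~\ref{prop:Ai*} route uniformly.

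Here is the concrete difficulty. After compressing by $s$, you need to write each $\mathcal F_{\alpha_i,r_i'}(A_i)$ (with $\alpha_i=s_i/s$, $\sum\alpha_i=1$) in the form $(\smd{A_i}{\alpha_i}\oplus\smd\Cpx{1-\alpha_i})*B_{w_i}$, where $w_i=r_i'-\alpha_i(1-\alpha_i)$. The constraint $r_i>1-s_i$ only guarantees $w_i>(1-\alpha_i)^2$, and it can happen that $w_i<1-\alpha_i$. But any admissible $B$ in the sense of \cite{FDIM} whose largest ``$\Cpx$-corner'' projection has trace $\le\alpha_i$ (the necessary factoriality condition for the free product with $\smd{A_i}{\alpha_i}\oplus\smd\Cpx{1-\alpha_i}$) has free dimension at least $1-\alpha_i$. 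So when $(1-\alpha_i)^2<w_i<1-\alpha_i$ the desired decomposition simply does not exist, and your ``borrowing'' from the leftover $L\mathbb F_R$ term cannot help because $R=(|I|-1)(s^{-2}-1)$ can be zero or too small: for instance $|I|=2$, $s_1=s_2=1/2$, $s=1$, $r_1=r_2=0.51$ gives $R=0$ and $w_i=0.26<1/2=1-\alpha_i$. Your fallback of a preliminary amplification by $1/n$ does generate abundant $L\mathbb F$-mass, but it also destroys the normalization $\sum\alpha_i=1$ that Proposition~\ref{prop:Ai*} requires: the compressed first parameters become $n\alpha_i$, summing to $n$. Making this work would require first splitting each $\mathcal F_{n\alpha_i,\hat r_i}(A_i)$ into $n$ pieces of first parameter $\alpha_i$ (via the addition formula) and then applying Proposition~\ref{prop:Ai*} $n$ separate times to reassemble $A^{*n}$. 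That is a substantially more elaborate argument than ``more intricate weight-tracking''; your proposal reads as if a single application of Proposition~\ref{prop:Ai*} will suffice, and it will not in general.

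In short: the infinite case is fine and matches the paper; the finite case is a different route with a genuine gap as written. The paper's induction on $|I|$, with the $|I|=2$ base case done by compressing by $st/n$ for large $n$ and using the explicit two-summand structure of $A=\smd{A_1}{t}\oplus\smd{A_2}{1-t}$ together with the technique of Lemma~\ref{lem:F_s+n,r}, sidesteps all of these boundary issues cleanly.
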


\begin{proof}
	When $I$ is infinite, $r = \infty $ necessarily. Then the result follows from the same proof as of Theorem \ref{thm:finitessum}, except in the series of isomorphisms~\eqref{eq:fpsum}, we replace each
	\[
	\smd A{s_i}\quad\text{with}\quad\smd{A_i}{s_i}.
	\]

	When $I$ is finite, it suffices to show the case $|I|=2$, and then the theorem follows by induction on $|I|$. In this case, $A$ is
	\[
		\underset{t}{A_1} \oplus \underset{1-t}{A_2}
	\]
	where $t = s_1/s$. We realize $\mathcal{F}_{s,r}(A)$ by choosing $n \in \mathbb{N}$ large enough so that $(st/n)^{-2} -1 > 1$, and that
	\[
		(\underset{t}{\mathbb{C}} \oplus \underset{1-t}{\mathbb{C}})^{*n}
	\]
	is a factor, in addition to 
	\[
		\mathcal{F}_{s,r}(A) \cong (A^{*n} * L\mathbb{F}_{\frac{s+r-1}{s^2}n^2-n+1})^{n/s}.
	\]
	We use Theorem 1.5 of \cite{DR00} and the rescaling formula (\ref{eq:F_rescale}):
	\begin{align*}
		(\mathcal{F}_{st, r_1}(A_1) & * \mathcal{F}_{s(1-t), r_2}(A_2))^{st/n} \\
		& \cong (\mathcal{F}_{st, r_1}(A_1))^{st/n} * (\mathcal{F}_{s(1-t), r_2}(A_2))^{st/n} * L\mathbb{F}_{(st/n)^{-2}-1} \\
		& \cong \mathcal{F}_{n,0}(A_1) * (\mathcal{F}_{n,1 + \frac{2nt(1-t)-1}{(1-t)^2}}(A_2))^{\frac{t}{1-t}} * L\mathbb{F}_{t^{-2}(\frac{s+r-1}{s^2}n^2-n+1)}.
	\end{align*}
	Then we use the same technique as the proof of Lemma \ref{lem:F_s+n,r} to compute
	\[
		((\underset{t}{A_1} \oplus \underset{1-t}{A_2})^{*n})^t \cong A_1^{*n} * ((\underset{t}{\mathbb{C}} \oplus \underset{1-t}{A_2})^{*n})^t
	\]
	and
	\[
		((\underset{t}{\mathbb{C}} \oplus \underset{1-t}{A_2})^{*n})^{1-t} \cong A_2^{*n} * ((\underset{t}{\mathbb{C}} \oplus \underset{1-t}{\mathbb{C}})^{*n})^{1-t} \cong \mathcal{F}_{n, 1+\frac{2nt(1-t)-1}{(1-t)^2}}(A_2).
	\]
	The above two lines imply the following:
	\[
		((\underset{t}{A_1} \oplus \underset{1-t}{A_2})^{*n})^t \cong A_1^{*n} * (\mathcal{F}_{n, 1+\frac{2nt(1-t)-1}{(1-t)^2}}(A_2))^{\frac{t}{1-t}}.
	\]
	Finally, using Theorem 1.5 of \cite{DR00}, we compute
	\begin{align*}
		(\mathcal{F}_{s,r}(A))^{st/n} & \cong (A^{*n} * L\mathbb{F}_{\frac{s+r-1}{s^2}n^2-n+1})^{t} \\
		& \cong (A^{*n})^t * (L\mathbb{F}_{\frac{s+r-1}{s^2}n^2-n+1})^t * B_{t^{-2}-1} \\
		& \cong (A^{*n})^t * L\mathbb{F}_{t^{-2}(\frac{s+r-1}{s^2}n^2-n+1)},
	\end{align*}
	where $B_{t^{-2}-1}$ is a tracial von Neumann algebra having free dimension $t^{-2}-1$. Now putting them all together, we obtain
	\[
		(\mathcal{F}_{s,r}(A))^{st/n} \cong (\mathcal{F}_{st, r_1}(A_1) * \mathcal{F}_{s(1-t), r_2}(A_2))^{st/n}.
	\]
	Rescaling will then prove the case $|I|=2$.   
\end{proof}


%

\begin{bibdiv}
\begin{biblist}

\bib{II1}{misc}{
      author={Anantharaman, Claire},
      author={Popa, Sorin},
       title={An introduction to {II}$_1$ factors},
        date={2020},
        note={\url{https://www.math.ucla.edu/~popa/Books/IIunV15.pdf}},
}

\bib{abvNa}{article}{
      author={Blecher, David~P.},
      author={Goldstein, Stanisław},
      author={Labuschagne, Louis~E.},
       title={Abelian von {N}eumann algebras, measure algebras and ${L}^\infty$-spaces},
        date={2022},
     journal={Expositiones Mathematicae},
      volume={40},
      number={3},
       pages={758\ndash 818},
}

\bib{NON_ISOM}{article}{
      author={Boutonnet, Remi},
      author={Drimbe, Daniel},
      author={Ioana, Adrian},
      author={Popa, Sorin},
       title={Non-isomorphism of ${A}^{*n}$, $2 \leq n \leq \infty$, for a non-separable abelian von {N}eumann algebra {A}},
        date={2024},
     journal={Geometric and Functional Analysis},
      volume={34},
      number={2},
       pages={393\ndash 408},
}

\bib{ultrapower}{article}{
      author={Boutonnet, Rémi},
      author={Chifan, Ionuţ},
      author={Ioana, Adrian},
       title={{II}$_1$ factors with non-isomorphic ultrapowers},
        date={2017},
     journal={Duke Mathematical Journal},
      volume={166},
      number={11},
       pages={2023\ndash 2051},
}

\bib{FDIM}{article}{
      author={Dykema, Ken},
       title={Free products of hyperfinite von {N}eumann algebras and free dimension},
        date={1993},
     journal={Duke Mathematical Journal},
      volume={69},
      number={1},
       pages={97 \ndash  119},
}

\bib{MATMOD}{article}{
      author={Dykema, Ken},
       title={On certain free product factors via an extended matrix model},
        date={1993},
     journal={Journal of Functional Analysis},
      volume={112},
       pages={31\ndash 60},
}

\bib{IFGF}{article}{
      author={Dykema, Ken},
       title={Interpolated free group factors},
        date={1994},
     journal={Pacific Journal of Mathematics},
      volume={163},
      number={1},
       pages={123\ndash 135},
}

\bib{DR00}{article}{
      author={Dykema, Ken},
      author={R\u{a}dulescu, Florin},
       title={Compressions of free products of von {N}eumann algebras},
        date={2000},
     journal={Math. Ann.},
      volume={316},
      number={1},
       pages={61\ndash 82},
}

\bib{MvN43}{article}{
      author={Murray, F.J.},
      author={von Neumann, John},
       title={On rings of operators. {IV}},
        date={1943},
     journal={Ann. of Math.},
      volume={44},
       pages={716\ndash 808},
}

\bib{Rad94}{article}{
      author={R\u{a}dulescu, Florin},
       title={Random matrices, amalgamated free products and subfactors of the von {N}eumann algebra of a free group, of noninteger index},
        date={1994},
     journal={Invent. Math.},
      volume={115},
      number={2},
       pages={347\ndash 389},
}

\bib{Voi87}{article}{
      author={Voiculescu, Dan},
       title={Multiplication of certain noncommuting random variables},
        date={1987},
        ISSN={0379-4024},
     journal={J. Operator Theory},
      volume={18},
      number={2},
       pages={223\ndash 235},
}

\end{biblist}
\end{bibdiv}
 
\end{document}